\titleformat*{\section}{\large\bfseries}
\titleformat*{\subsection}{\normalsize\bfseries}
\newenvironment{proof}{\noindent{\em \textbf{Proof.}}}{\quad \hfill$\Box$\vspace{2ex}}
\newtheorem{theorem}{Theorem}[section]
\newtheorem{remark}[theorem]{Remark}
\newtheorem{corollary}[theorem]{Corollary}
\newtheorem{lemma}[theorem]{Lemma}
\newtheorem{example}[theorem]{Example}
\newtheorem{proposition}[theorem]{\bf Proposition}
\numberwithin{equation}{section}
\def \H {\mathbb{H}}
\def \C {\mathbb{C}}
\def\qi { \bm {i}}
\def\qj { \bm{j}}
\def\qk { \bm{k}}
\def \bvphi {\bm{\varphi}}
\def \bmeta {\bm{\eta}}
\def \bxi {\bm{\xi}}
\def \bx {\bm{x}}
\def \bu {\bm{u}}
\def \bv {\bm{v}}
\def \bw {\bm{w}}
\def \pL {\mathcal{L}}
\newcommand{\norm}[1]{\left\lVert#1\right\rVert}
\newcommand{\abs}[1]{\left|#1\right|}
\title{\Large \textbf{Sampling expansions associated with quaternion difference equations}}
\author[a,b]{\small Dong Cheng\thanks{chengdong720@163.com}}
\author[c]{\small  Kit Ian Kou\thanks{kikou@umac.mo}}
\author[d]{\small Yonghui Xia \thanks{xiadoc@163.com}}
\author[e]{\small Junfeng Xu  \thanks{xujunf@gmail.com}}
\affil[a]{\small{Research Center for Mathematics and Mathematics Education, Beijing Normal University at Zhuhai, 519087, China}}
\affil[b]{\small{Laboratory of Mathematics and Complex Systems (Ministry of Education), School of Mathematical Sciences, Beijing Normal University, Beijing, 100875, China}}
\affil[c]{\small{Department of Mathematics, Faculty of Science and Technology, University of Macau, Macao, China}}
\affil[d]{\small{Department of Mathematics, Zhejiang Normal University, Jinhua, 321004, China}}
\affil[e]{\small{Department of Mathematics, Wuyi University, Jiangmen, 529020, China}}
\date{}
\begin{document}
  \maketitle
\begin{abstract}
\normalsize
Starting with a quaternion difference equation with boundary conditions,   a parameterized   sequence  which is complete in finite dimensional quaternion Hilbert space is derived. By  employing the parameterized   sequence as the kernel of  discrete transform, we form a quaternion function space whose elements    have sampling expansions. Moreover, through   formulating  boundary-value problems, we make a connection between a class of tridiagonal quaternion matrices and polynomials with quaternion coefficients. We show that for a  tridiagonal symmetric quaternion matrix, one can always associate a quaternion characteristic  polynomial whose roots are eigenvalues of the matrix. Several examples are   given to illustrate the results.

\end{abstract}

 \begin{keywords}
Sampling expansions, quaternion difference equations, interpolation, tridiagonal matrices, quaternion polynomials.
\end{keywords}

\begin{msc}
 39A12,  11R52,	41A05, 12E05.
\end{msc}

\section{Introduction}\label{S1}
Sampling theorems for general integral transforms other than the Fourier one were discussed by Kramer \cite{kramer1959generalized}. Kramer's   result generalizes the Shannon sampling theorem and it can be flexibly applied to many physical and engineering applications.  A necessary condition for Kramer's sampling theorem is that the kernel of integral transform has to be capable of   generating an   orthogonal  basis in a certain Hilbert space. It is known that the kernels in Kramer's   theorem can be extracted from some boundary-value problems of differential equations (see e.g.,\cite{zayed1993advances}). Accordingly, sampling theorems  associated with several types of  boundary-value problems were extensively investigated \cite{zayed1990lagrange,zayed1991kramer}.  Based on difference operators, the author in \cite{Annaby199855} provided a discrete version of Kramer's theorem, and numerous studies of sampling expansions associated with difference equations were introduced in a series of papers   \cite{annaby1998finite,annaby1999sampling,abd2018difference}.

The purpose of this paper is to derive sampling expansions associated with quaternion difference equations. On the one hand, the theory of quaternion dynamic equations (including quaternion difference equations) \cite{wilczynski2009quaternionic,zhang2011global,cheng2018unified} has received a lot of attention recently because many physical problems can be described as quaternion dynamic models \cite{kouxialinear2018,Cheng2020Floquetq,liwangcauchy2020}.  On the other hand, quaternion algebra has shown advantages over real and complex  within color image processing, computer graphics, and multidimensional signal processing, etc., for  modelling of
rotation and  cross-information between multichannel  and high dimensional data  (see
for instance, \cite{xu2015vector,vince2011quaternions,wang2016polarimetric}). In particular,  the hypercomplex sampling scheme plays an important role in  engineering applications \cite{SCHUYLER201320}. 
Although there are sampling theorems concerning quaternion-valued functions defined on $\mathbb{R}^2$ \cite{cheng2017novel,chengkou2018generalized}, only the uniform sampling case is considered.  In this paper,
we investigate  the relationship between  quaternion dynamic equations and sampling theory. Some novel results about sampling expansions   for quaternion-valued functions are obtained by means of the latest approaches used in the theory of quaternion dynamic equations. In contrast to the previous results of quaternionic sampling theory, the quaternion-valued functions considered in the present paper  are defined on quaternion skew field   and the sample points involved in interpolation formulas are non-uniformly spaced in $\mathbb{R}^4$.

 In the quaternion analysis setting, the study of difference equations \cite{cheng2018unified},   matrix theory \cite{zhang1997quaternions,farenick2003spectral} and  zeros computing of polynomials \cite{pogorui2004structure,janovska2010note} is  essentially different from the traditional case due to the non-commutative property of quaternions. To study the sampling theory for the quaternion-valued functions  defined on quaternion skew field (or equivalently  $\mathbb{R}^4$), we cannot directly use the related results  such as spectral theorem of matrices and structure of zeros of polynomials  in the real or complex case. Thus  formulating sampling  expansions for quaternion-valued functions has to be based on the distinctive structure of quaternions. This paper involves eigenvalue problem of quaternion matrices, spectral property of operators in quaternion Hilbert spaces and zeros of quaternion polynomials.  Starting with a quaternion difference equation with boundary conditions,   a parameterized   sequence  which is complete in finite dimensional quaternion Hilbert space is obtained. By  employing the parameterized   sequence as the kernel of  discrete transform, we form a quaternion function space whose elements    have sampling expansions.
 The sample points are not only the eigenvalues of a certain quaternion matrix but also the zeros of a relevant quaternion polynomial. Moreover, we show that for a  tridiagonal symmetric quaternion matrix, one can always associate a quaternion    polynomial whose roots are eigenvalues of the matrix.
The main contributions of this paper are highlighted as follows:
\begin{enumerate}
	\item We present the sampling expansions associated with quaternion difference equations (Theorem \ref{samplthm}).  The quaternion-valued functions considered in the   paper  are defined on quaternion skew field   and the sample points involved in interpolation formulas are non-uniformly spaced in $\mathbb{R}^4$.
	\item It is shown that there are infinitely many sampling expansions for the functions satisfying certain conditions (Proposition \ref{nonunique}). We show how to construct different sampling expansions for such a function (see the proof of Proposition \ref{nonunique} and Example \ref{exsampling1}). Furthermore, one can derive a sampling expansion  for a function  from an existing expansion of another function  by making use of the relationship between them (Example \ref{exsampling1}). 
	\item  We investigate the characteristic polynomials for tridiagonal symmetric  quaternion matrices (Theorem \ref{characterpoly}). It is shown that     $\gamma$ is an eigenvalue of such a matrix  if and only if $\gamma$ is similar with a zero of the characteristic polynomial.
\end{enumerate}

The rest of the paper is organized as follow.  In Section \ref{S2}, some useful results of quaternion algebra  are reviewed. Besides, a lemma  of quaternion polynomial is presented.   Section \ref{S3} is devoted to the sampling expansions associated with quaternion difference equations.  In Section \ref{S4}, we investigate the characteristic polynomials for  tridiagonal symmetric quaternion matrices. Finally,  conclusions are drawn at the end of the paper.

\section{Preliminaries}\label{S2}

\subsection{Quaternions and matrices of quaternions}\label{S21}
The skew field of quaternions \cite{sudbery1979quaternionic} denoted by $\H$ is the four-dimensional algebra over $\mathbb{R}$ with basis $\{1,\qi,\qj,\qk\}$. The elements $\qi$, $\qj$ and $\qk$ obey the Hamilton's multiplication rules
\begin{equation*}
\qi\qj=-\qj\qi=\qk,\ \qj\qk=-\qk\qj=\qi,\ \qk\qi=-\qi\qk,\ \qi^2=\qj^2=\qi\qj\qk=-1.
\end{equation*}
For each quaternion $q=q_0+q_1\qi+q_2\qj+q_3\qk$, its conjugate is defined by $\overline{q}=q_0-q_1\qi-q_2\qj-q_3\qk$, then $q\overline{q}=\overline{q}q=\sum_{m=0}^3q_m^2$ and its norm is given by $\abs{q}=\sqrt{q\overline{q}}$.
Using the conjugate and norm of $q$, one can define the inverse of 
$q\in \H \setminus \{0\}$ by $q^{-1}=\overline{q}/\abs{q}^2$. Observe that the set $\mathbb{C}$ of complex numbers appears as a sub-algebra of $\H$: $\mathbb{C}=\mathrm{Span}_{\mathbb{R}}\{1,\qi\}$, thus we will view $\mathbb{C}$ as a subset of $\H$.

Let $M_{m\times n}(\H)$, simply $M_n(\H)$ when $m=n$, denote the set of all $m$ by $n$ matrices with entries from $\H$. Just as with the complex case, a square matrix $A\in M_n(\H)$ is said to be normal if $A^*A=AA^*$, unitary if $AA^*=I$ and invertible if $AB=BA=I$ for some $B\in M_n(\H)$. Here $A^*$ is the conjugate transpose of $A$. 
The concept of eigenvalues for quaternion matrices is somewhat different from the complex case.  Owing to the non-commutativity of quaternions, there are two types (left and right) of eigenvalues for quaternion matrices.   A vector $\bxi\in \H^n\setminus \{0\}$ is said to be a right (left) eigenvector of $A$ corresponding to the right (left) eigenvalue $\lambda\in\H$ provided that 
\begin{equation}\label{defeigvalue}
A\bxi=\bxi\lambda \quad (A\bxi= \lambda \bxi)
\end{equation}
holds. In the sequel we will only consider right eigenvalues and right eigenvectors,  so we will   use terminology eigenvalues and eigenvectors for simplicity.

 A matrix $B_1$ is said to be similar to a matrix $B_2$ if $B_2=S^{-1}B_1S$ for some non-singular matrix $S$. 
In particular, we say that two quaternions $p,q$ are similar if $p=\alpha^{-1}q\alpha$ for some nonzero   $\alpha\in\H$.
From \cite{zhang1997quaternions,rodman2014topics}, we know that any matrix $A\in M_n(\H) $ has exactly $n$ eigenvalues (including multiplicity) which are complex numbers with nonnegative imaginary parts. These eigenvalues are called standard eigenvalues. Like the complex case, any two similar quaternion matrices have the same eigenvalues. We denote the totality of eigenvalues of $A$ by $\sigma(A)$, the spectrum of $A$. It is easy to see that
if (\ref{defeigvalue})  holds, then $A\bxi\alpha=(\bxi\alpha)(\alpha^{-1}\lambda\alpha)$ for
all nonzero  $\alpha\in\H$. This means that $\bxi\alpha$ is a eigenvector of $A$ corresponding to eigenvalue $\alpha^{-1}\lambda\alpha$ rather than $\lambda$.
 For any  $q\in \H$,  its similarity orbit \cite{farenick2003spectral} is defined by
\begin{equation*}
\theta(q):=\left\{\alpha^{-1}q\alpha: \alpha\in \H\setminus\{0\}\right\}=\left\{\alpha q\overline{\alpha}: \alpha\in\H, \abs{\alpha}=1\right\}.
\end{equation*}
It follows that if $\theta(q)\cap \sigma(A)\neq \emptyset$, then $\theta(q)\subset \sigma(A)$. The similarity orbit $\theta(q)$ contains infinitely many elements for $q\in\H\setminus \mathbb{R}$, but only two of them are complex. Furthermore,  two similarity orbits are disjoint, as described by the following lemmas.
\begin{lemma}\cite{farenick2003spectral}
For every $q \in\H\setminus \mathbb{R}$, there is a non-real $z\in\C$ such that $\theta(q)\cap \C =\{z,\overline{z}\}$.
\end{lemma}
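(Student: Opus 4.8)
Fix $q=q_0+q_1\qi+q_2\qj+q_3\qk\in\H\setminus\mathbb{R}$ and put $r:=\sqrt{q_1^2+q_2^2+q_3^2}$, so $r>0$. The plan is to show that $z:=q_0+r\qi\in\C$ is the required element: it is non-real because $r>0$, and I will establish the two inclusions $\theta(q)\cap\C\subseteq\{z,\overline z\}$ and $\{z,\overline z\}\subseteq\theta(q)$, which together with $z\neq\overline z$ complete the proof.

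For the inclusion $\theta(q)\cap\C\subseteq\{z,\overline z\}$ I would use that conjugation by a unit quaternion preserves the scalar part and the norm. Indeed, for $\abs\alpha=1$ one has $\overline{\alpha q\overline\alpha}=\alpha\,\overline q\,\overline\alpha$, so $\alpha q\overline\alpha+\overline{\alpha q\overline\alpha}=\alpha(q+\overline q)\overline\alpha=2q_0$, giving $\mathrm{Re}(\alpha q\overline\alpha)=q_0$, while $\abs{\alpha q\overline\alpha}=\abs q$ is immediate. Hence every $w\in\theta(q)$ satisfies $\mathrm{Re}(w)=q_0$ and $\abs w^2=\abs q^2=q_0^2+r^2$. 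If in addition $w\in\C=\mathrm{Span}_{\mathbb{R}}\{1,\qi\}$, then $w=q_0+b\qi$ with $b^2=r^2$, so $w\in\{q_0+r\qi,\,q_0-r\qi\}=\{z,\overline z\}$.

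For the reverse inclusion I must conjugate $q$ onto $z$ and onto $\overline z$. Writing the vector part as $q-q_0=r\vec u$ with $\vec u$ a pure unit quaternion ($\vec u^2=-1$), it is enough to find a unit quaternion $\alpha$ with $\alpha\vec u\,\overline\alpha=\qi$, since then $\alpha q\overline\alpha=q_0+r\,\alpha\vec u\,\overline\alpha=z$. Such an $\alpha$ exists because conjugation by unit quaternions acts on the space $\mathrm{Im}\,\H\cong\mathbb{R}^3$ of pure quaternions as the full rotation group $SO(3)$, so $\vec u$ can be carried onto $\qi$; explicitly one may take $\alpha=\vec w$ with $\vec w=(\vec u+\qi)/\abs{\vec u+\qi}$ when $\vec u\neq-\qi$ (conjugation by $\vec w$ is rotation by $\pi$ about the bisector of $\vec u$ and $\qi$, which interchanges them), and $\alpha=\qj$ when $\vec u=-\qi$. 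Since also $\qj z\overline\qj=q_0+r\,\qj\qi\overline\qj=q_0-r\qi=\overline z$, we get $\overline z=(\qj\alpha)\,q\,\overline{(\qj\alpha)}\in\theta(q)$ as well. Combining the two inclusions yields $\theta(q)\cap\C=\{z,\overline z\}$.

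I expect the only step that is not a routine computation to be this second inclusion, where I invoke the surjectivity of the unit-quaternion conjugation action onto $SO(3)$ (equivalently, the explicit construction of the rotating quaternion $\alpha$). This is a classical property of the quaternions and may simply be quoted; everything else reduces to bookkeeping with conjugates and norms.
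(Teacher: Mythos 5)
Your proof is correct: both inclusions check out, including the explicit rotating quaternions (conjugation by the normalized bisector $(\vec u+\qi)/\abs{\vec u+\qi}$ indeed swaps $\vec u$ and $\qi$, and $\qj$ handles the exceptional case $\vec u=-\qi$ as well as the passage from $z$ to $\overline z$). Note that the paper itself states this lemma as a citation to \cite{farenick2003spectral} without proof; your argument --- conjugation-invariance of the real part and the norm for the inclusion $\theta(q)\cap\C\subseteq\{z,\overline z\}$, plus surjectivity of the unit-quaternion action on pure quaternions for the reverse inclusion --- is precisely the standard proof found in that reference, so nothing further is needed.
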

\begin{lemma}\cite{cheng2017novel}
	If $\theta(p)\cap\theta(q)\neq \emptyset$, then $\theta(p)=\theta(q)$.
\end{lemma}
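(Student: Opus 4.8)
The plan is to recognize the relation of similarity on $\H$ as an equivalence relation, so that each similarity orbit $\theta(q)$ is exactly an equivalence class; the assertion then becomes the textbook fact that two equivalence classes are either disjoint or identical. Concretely, I would write $p\sim q$ when $p=\alpha^{-1}q\alpha$ for some $\alpha\in\H\setminus\{0\}$, so that $\theta(q)=\{x\in\H:x\sim q\}$ by definition.

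First I would check the three axioms. Reflexivity is immediate from $q=1^{-1}q\,1$. For symmetry, if $p=\alpha^{-1}q\alpha$ with $\alpha\neq 0$, then $q=\alpha p\alpha^{-1}=(\alpha^{-1})^{-1}p(\alpha^{-1})$, so $q\sim p$. For transitivity, if $p=\alpha^{-1}q\alpha$ and $q=\beta^{-1}r\beta$, then $p=\alpha^{-1}\beta^{-1}r\beta\alpha=(\beta\alpha)^{-1}r(\beta\alpha)$, and $\beta\alpha\neq 0$ because $\H$ has no zero divisors (the norm is multiplicative, so $\abs{\beta\alpha}=\abs{\beta}\abs{\alpha}\neq 0$). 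Hence $\sim$ is an equivalence relation.

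Now suppose $\theta(p)\cap\theta(q)\neq\emptyset$ and pick $x$ in the intersection. Then $x\sim p$ and $x\sim q$, so by symmetry and transitivity $p\sim q$. Consequently, for any $y\in\theta(p)$ we have $y\sim p\sim q$, i.e. $y\in\theta(q)$; this gives $\theta(p)\subseteq\theta(q)$, and the reverse inclusion follows by exchanging $p$ and $q$, whence $\theta(p)=\theta(q)$.

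There is no genuine obstacle here — the only substantive ingredient is multiplicativity of the quaternion norm, used to rule out zero divisors. Optionally, to make the partition manifest, I would also record the sharper description $p\sim q \iff \mathrm{Re}(p)=\mathrm{Re}(q)$ and $\abs{p}=\abs{q}$: any conjugation $q\mapsto\alpha^{-1}q\alpha$ fixes both the real part and the norm, and conversely two quaternions with the same real part and norm have purely imaginary parts of equal modulus, hence conjugate (one imaginary unit can be rotated to another within $\H$), so they are similar. In this form $\theta(q)$ is labelled by the pair $(\mathrm{Re}(q),\abs{q})$, and distinct labels give disjoint orbits, which immediately yields the stated dichotomy.
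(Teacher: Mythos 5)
Your proof is correct: similarity is an equivalence relation on $\H$ (with $\beta\alpha\neq 0$ guaranteed by multiplicativity of the norm), so the orbits $\theta(q)$ are equivalence classes and two classes that meet must coincide; the optional characterization of $\theta(q)$ by the pair $(\mathrm{Re}(q),\abs{q})$ is also accurate. Note that the paper itself does not prove this lemma but imports it from \cite{cheng2017novel}; your argument is the standard one and there is nothing to add.
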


\subsection{Quaternion Hilbert spaces}\label{S22}
An abelian group $H$ is a right $\H$-module \cite{farenick2003spectral} if there is a right scalar multiplication map
$(\bu,\alpha)\mapsto \bu\alpha$ from $H\times \H$ into $H$ such that for all $\bu,\bv\in H$ and $\alpha,\beta\in\H$
\begin{equation*}
\bu(\alpha+\beta)=\bu\alpha+\bu\beta,\ (\bu+\bv)\alpha= \bu\alpha+\bv\alpha,\   \bu(\alpha\beta)=(\bu\alpha)\beta,\ \bu 1=\bu.
\end{equation*}
A subset $V=\{\bu_1,\bu_2,\dots,\bu_m\} $ of $H$ is said to be (right) $\H$-independent if 
\begin{equation*}
\sum_{l=1}^m \bu_l\alpha_l=0,\ \alpha_l\in\H \quad \text{implies that}\quad  \alpha_1=\alpha_2=\cdots=\alpha_m=0.
\end{equation*}
If any element of $H$ can be expressed by a right $\H$-linear combination of $V\subset H$, then $V$ is called a basis of $H$ and the dimension of $H$ is $m$.

A right $\H$-module $H$
is called a quaternion pre-Hilbert space if there exists a quaternion-valued
function (inner product) $\langle\cdot, \cdot\rangle : H \times H \to \H$ such that for all $\bu,\bv,\bw\in H$ and $\alpha,\beta\in\H$:
\begin{enumerate}
	\item $\langle \bu,\bv \rangle=\overline{\langle \bv,\bu \rangle}$,
	\item $\langle \bu,\bv\alpha+\bw\beta\rangle =\langle \bu,\bv \rangle \alpha+\langle \bu,\bw \rangle \beta$,
	\item $\langle \bu,\bu\rangle\geq0$ and $\langle \bu,\bu\rangle=0$ if and only if $\bu=0$.
\end{enumerate}
The function $u\mapsto \norm{\bu}=\sqrt{\langle \bu,\bu\rangle}$ is a norm on $H$. Under this norm, the Cauchy-Schwartz inequality and triangular  inequality (see \cite{brackx1982clifford}) hold as 
$\abs{\langle \bu,\bv\rangle}\leq \langle \bu,\bu\rangle \langle \bv,\bv\rangle$ and $ \norm{\bu+\bv}\leq \norm{\bu}+\norm{\bv}$ respectively. Moreover, if $H$ is complete under the norm $\norm{\cdot}$, then it is called a quaternion Hilbert space. For any $n$-dimensional Hilbert space $H$,  by the quaternion version of Gram-Schmidt theorem \cite{farenick2003spectral}, there exists a basis $V=\{\bu_1,\bu_2,\dots,\bu_n\} $ of $H$ such that $\norm{\bu_k}=1$ for every $1\leq k \leq n$ and $\langle \bu_s, \bu_t\rangle=0$ for $s\neq t$. Such a basis is called an orthonormal basis. For every $\bv\in H$, it can be expanded as
$\bv=\bu_1\langle \bu_1,\bv\rangle +\bu_2\langle \bu_2,\bv\rangle+\cdots+\bu_n\langle \bu_n,\bv\rangle$.

A right $\H$-linear operator is a function $\mathcal{T}: H\to H$ such that
$\mathcal{T}(\bu\alpha+\bv\beta)=\mathcal{T}(\bu)\alpha+\mathcal{T}(\bv)\beta$ for all  $\bu,\bv\in H$ and $\alpha,\beta\in\H$.  Such an operator is also called a endomorphism, so we denote by $E_n(H)$  the set of all endomorphisms on $H$. For every $\mathcal{T}\in E_n(H)$, the Riesz representation theorem (see e.g., \cite{brackx1982clifford,farenick2003spectral}) guarantees that there exists a unique operator $\mathcal{T}^*\in E_n(H)$, which is called the adjoint of $\mathcal{T}$, such that for all $\bu,\bv\in H$, $\langle\mathcal{T}\bu,\bv\rangle=\langle \bu,\mathcal{T}^*\bv\rangle$. As for quaternion matrices, an operator $\mathcal{T}\in E_n(H)$ is said to be  self-adjoint if $\mathcal{T}=\mathcal{T}^*$, normal if $\mathcal{T}\mathcal{T}^*=\mathcal{T}^*\mathcal{T}$ and unitary if $\mathcal{T}\mathcal{T}^*=\mathcal{T}^*\mathcal{T}=\mathcal{I}$.

We recall the spectral theorem for normal operators \cite{farenick2003spectral}
as follows.
\begin{theorem}\label{spectral1}
Suppose that $H$ is an $n$-dimensional quaternion Hilbert space. Then $\mathcal{T}\in E_n(H)$ is normal if and only if there is an orthonormal basis $V=\{\bu_1,\bu_2,\dots,\bu_n\}\subset H$ and $\lambda_1,\lambda_2,\dots,\lambda_n\in\C^+$ such that $\mathcal{T}\bu_k = \bu_k\lambda_k$ for every $1\leq k \leq n$.
\end{theorem}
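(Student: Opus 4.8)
The statement is the finite-dimensional quaternion spectral theorem, and I would prove it essentially as in the complex case, with extra care for non-commutativity. The plan is to prove the two implications separately: the ``if'' direction by a coordinate computation and the ``only if'' direction by induction on $n=\dim H$. First I would fix, via the quaternion Gram--Schmidt process recalled above, an orthonormal basis of $H$ and identify $E_n(H)$ with $M_n(\H)$ through $\mathcal{T}\mapsto A$ with $A_{jk}=\langle\bu_j,\mathcal{T}\bu_k\rangle$; orthonormality gives $\mathcal{T}\bu_k=\sum_j\bu_jA_{jk}$, and from $\langle\mathcal{T}\bu_j,\bu_k\rangle=\langle\bu_j,\mathcal{T}^*\bu_k\rangle$ the matrix of $\mathcal{T}^*$ is the conjugate transpose $A^*$, so $\mathcal{T}$ is normal iff $A$ is. In this language the theorem reads: a normal $A\in M_n(\H)$ is unitarily similar to $\mathrm{diag}(\lambda_1,\dots,\lambda_n)$ with $\lambda_k\in\C^+$, and any such diagonal matrix is normal.

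For the ``if'' direction: if the matrix of $\mathcal{T}$ in an orthonormal basis is $D=\mathrm{diag}(\lambda_1,\dots,\lambda_n)$ with $\lambda_k\in\C$, then that of $\mathcal{T}^*$ is $D^*=\mathrm{diag}(\overline{\lambda_1},\dots,\overline{\lambda_n})$, and since complex numbers commute, $DD^*=\mathrm{diag}(\abs{\lambda_1}^2,\dots,\abs{\lambda_n}^2)=D^*D$; hence $\mathcal{T}$ is normal.

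The substantive direction is the converse, which I would establish by induction on $n$. The base case $n=1$ is the assertion that every $q\in\H$ can be written $q=\overline{\beta}z\beta$ with $\abs{\beta}=1$ and $z\in\C^+$: for real $q$ take $z=q$, and for non-real $q$ use the orbit description recalled above, by which $\theta(q)\cap\C$ is a conjugate pair $\{z,\overline z\}$, exactly one of which lies in $\C^+$. For the inductive step I would invoke the cited fact that an arbitrary $A\in M_n(\H)$ has a standard eigenvalue $\lambda_1\in\C^+$, pick a unit eigenvector $\bu_1$ with $A\bu_1=\bu_1\lambda_1$, and complete it to an orthonormal basis; then the first column of $A$ is $\lambda_1$ in the top slot and zero below, so $A=\left(\begin{smallmatrix}\lambda_1 & \mathbf{c}^{*}\\ \mathbf{0} & A_1\end{smallmatrix}\right)$ in block form. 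Comparing the $(1,1)$-entries of $AA^*$ and $A^*A$ gives $\abs{\lambda_1}^2+\norm{\mathbf{c}}^2=\abs{\lambda_1}^2$, forcing $\mathbf{c}=\mathbf{0}$; thus $A=\mathrm{diag}(\lambda_1,A_1)$ and the block $A_1\in M_{n-1}(\H)$ satisfies $A_1A_1^*=A_1^*A_1$. By the inductive hypothesis $A_1=U_1D_1U_1^*$ with $U_1$ quaternion unitary and $D_1$ diagonal with $\C^+$ entries; conjugating $A$ by $\mathrm{diag}(1,U_1)$ produces the desired unitary diagonalization, whose columns form the orthonormal basis $\{\bu_k\}$ with $\mathcal{T}\bu_k=\bu_k\lambda_k$.

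The one genuinely nontrivial input, and hence the main obstacle, is the existence of a standard eigenvalue in $\C^+$ for an arbitrary quaternion operator. This is exactly the cited fact that $A\in M_n(\H)$ has $n$ standard eigenvalues in the closed upper half-plane; the usual route to it is the $2n\times2n$ complex adjoint representation $\chi\colon M_n(\H)\to M_{2n}(\C)$, a unital $*$-homomorphism for which $\overline{\chi(A)}=J\chi(A)J^{-1}$ with $J$ the standard symplectic matrix, so the spectrum of $\chi(A)$ is a union of conjugate pairs and one selects from each the representative in $\C^+$. The remaining delicate points are purely bookkeeping imposed by non-commutativity: the left-conjugate-linearity convention $\langle\bu\alpha,\bv\rangle=\overline{\alpha}\langle\bu,\bv\rangle$ when passing between $\mathcal{T}$ and $A$, and the ordering of factors when multiplying block matrices over $\H$. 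An alternative that bypasses the induction is to diagonalize $\chi(A)$ directly by the classical complex spectral theorem and transport the eigenbasis back through $\chi$, but this requires the same structural facts about $\chi$ and is not shorter.
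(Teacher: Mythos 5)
The paper does not prove this statement at all: Theorem \ref{spectral1} is recalled verbatim from the reference \cite{farenick2003spectral}, so there is no internal argument to compare against. Your proposal is a correct and essentially self-contained proof along the standard Schur-type lines: the identification $\mathcal{T}\mapsto A$ with $A_{jk}=\langle\bu_j,\mathcal{T}\bu_k\rangle$ is a $*$-isomorphism under the paper's inner-product conventions (right-linearity in the second slot, $\langle\bu\alpha,\bv\rangle=\overline{\alpha}\langle\bu,\bv\rangle$), the ``if'' direction is the trivial commutation of complex diagonal matrices, and the induction step correctly exploits that $q\overline q=\overline q q=\abs{q}^2$ in $\H$ so the $(1,1)$-entry comparison still kills the off-diagonal block. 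The only external input is the existence of a right eigenvector with standard eigenvalue in $\C^+$, which you source from the complex adjoint matrix $\chi_A$; this is legitimate here since the paper itself takes that fact as known from \cite{zhang1997quaternions,rodman2014topics}, and your base case is exactly the similarity-orbit fact $\theta(q)\cap\C=\{z,\overline z\}$ recalled in Section \ref{S21}. The cited source \cite{farenick2003spectral} reaches the theorem through a more operator-theoretic route (spectral analysis of the algebra generated by a normal operator and repeated use of the quaternion Gram--Schmidt process); your induction is more elementary and shorter, at the price of invoking $\chi_A$ for eigenvalue existence rather than deriving it intrinsically. Minor bookkeeping you may want to make explicit if this were written out in full: that completing the unit eigenvector to an orthonormal basis changes the matrix by unitary conjugation (hence preserves normality), and that normalizing an eigenvector preserves the eigenvalue because $\norm{\bxi}$ is real and central.
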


Under the usual vector-scalar multiplication $\langle\bxi,\alpha\rangle\mapsto \bxi\alpha$, $\H^n$ is a right $\H$-module. Furthermore, $\H^n$ is a quaternion Hilbert space if it is embedded by inner product $\langle\bxi,\bmeta \rangle=\bxi^*\bmeta$.
Let $A\in M_n(\H), \bxi\in\H^n$ and define $\mathcal{T}_A = A\bxi$. Then we have the spectral theorem for normal matrices as follows.

\begin{theorem}\label{spectral2}
 The matrix $A\in M_n(H)$ is normal if and only if there is a unitary matrix $U\in M_n(\H)$ such that $U^*AU=D$, where   $D$ is a diagonal matrix and its entries are complex numbers with nonnegative imaginary parts. 
\end{theorem}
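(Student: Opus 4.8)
The plan is to obtain Theorem~\ref{spectral2} as the matrix incarnation of the operator statement Theorem~\ref{spectral1}, applied to the quaternion Hilbert space $\H^n$ with inner product $\langle\bxi,\bmeta\rangle=\bxi^*\bmeta$. The ``if'' direction is immediate: if $U^*AU=D$ with $U$ unitary and $D$ diagonal with entries in $\C^+$, then $D^*$ is again diagonal and $DD^*=D^*D$ (both equal the diagonal matrix of the $\abs{\lambda_k}^2$); writing $A=UDU^*$ and using $U^*U=UU^*=I$ gives $A^*A=UD^*DU^*=UDD^*U^*=AA^*$, so $A$ is normal.

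For the converse, assume $A$ is normal and let $\mathcal{T}_A\colon\H^n\to\H^n$, $\mathcal{T}_A\bxi=A\bxi$. Left multiplication by $A$ commutes with right scalar multiplication, so $\mathcal{T}_A(\bxi\alpha+\bmeta\beta)=(\mathcal{T}_A\bxi)\alpha+(\mathcal{T}_A\bmeta)\beta$ and $\mathcal{T}_A\in E_n(\H^n)$. Since $\langle A\bxi,\bmeta\rangle=(A\bxi)^*\bmeta=\bxi^*A^*\bmeta=\langle\bxi,A^*\bmeta\rangle$, the adjoint of $\mathcal{T}_A$ is $\mathcal{T}_{A^*}$, and therefore the hypothesis $A^*A=AA^*$ says exactly that $\mathcal{T}_A$ is a normal endomorphism of $\H^n$.

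By Theorem~\ref{spectral1} there are an orthonormal basis $\{\bu_1,\dots,\bu_n\}$ of $\H^n$ and $\lambda_1,\dots,\lambda_n\in\C^+$ with $A\bu_k=\bu_k\lambda_k$ for each $k$. Let $U=[\,\bu_1\ \cdots\ \bu_n\,]\in M_n(\H)$ be the matrix with these columns. Orthonormality ($\norm{\bu_k}=1$ and $\langle\bu_s,\bu_t\rangle=0$ for $s\neq t$) is precisely the identity $U^*U=I$; a square quaternion matrix possessing a one-sided inverse is invertible (equivalently, the $\bu_k$ form a basis of $\H^n$), so $U^*=U^{-1}$ and $U$ is unitary. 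Finally, the $k$-th column of $AU$ is $A\bu_k=\bu_k\lambda_k$, which is exactly the $k$-th column of $UD$ for $D=\mathrm{diag}(\lambda_1,\dots,\lambda_n)$; hence $AU=UD$, i.e.\ $U^*AU=D$, and $D$ is diagonal with complex entries having nonnegative imaginary parts, as required.

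The substantive content is all carried by Theorem~\ref{spectral1}; what remains are bookkeeping points that nonetheless must be respected in the noncommutative setting: the eigenvalue scalars $\lambda_k$ stay on the \emph{right} throughout (so that $A\bu_k=\bu_k\lambda_k$ is the $k$-th column of $UD$, not of $DU$), and one uses that over the division ring $\H$ a left inverse of a square matrix is automatically a two-sided inverse in order to pass from $U^*U=I$ to $UU^*=I$. I expect keeping the right-$\H$-module conventions straight to be the only place one can slip.
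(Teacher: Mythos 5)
Your proof is correct and follows exactly the route the paper intends: it introduces $\mathcal{T}_A\bxi=A\bxi$ on the quaternion Hilbert space $\H^n$ immediately before stating Theorem~\ref{spectral2}, so the result is meant to be read off from Theorem~\ref{spectral1} in precisely the way you carry out. Your added bookkeeping (right-hand placement of the $\lambda_k$ so that $AU=UD$, and the passage from $U^*U=I$ to unitarity) is accurate and fills in what the paper leaves implicit.
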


\subsection{Quaternion polynomials}\label{S23}

The polynomials with quaternion coefficients located on only  left side of powers are called simple quaternion polynomials \cite{janovska2010note}. Let
\begin{equation}\label{simplepolynomial}
p_n(z):=\sum_{k=0}^{n}c_kz^{k}, \quad c_k\in \H, \ 0\leq k\leq n,\ c_0c_n\neq 0
\end{equation}
be a given simple quaternion polynomial of degree $n$. It was shown in \cite{pogorui2004structure} by Pogorui and Shapiro that the polynomials of type (\ref{simplepolynomial}) may have two types of zeros: isolated and spherical zeros. 
Let $z_0$ be a zero of $p_n$ defined by (\ref{simplepolynomial}). If $z_0\notin \mathbb{R}$ and  has the property that $p_n(z)=0$ for all $z\in \theta(z_0)$, then it is called a spherical zero. Otherwise, $z_0$ is called an isolated zero. If the zero set of $p_n(z)$ intersects only with $n_1$ similarity orbits, we say that the number of zeros of $p_n(z)$ is $n_1$.  The authors in \cite{janovska2010note} proved that  $n_1\leq n$,   and they also presented an effective algorithm for finding all zeros including their types without using iterations.

Let $p_n(z,s)=\sum_{k=0}^{n}c_ksz^{k}$, we introduce a lemma concerning the zeros of $p_n(z,s_1)$ and $p_n(z,s_2)$ for $s_1\neq s_2$. 
\begin{lemma}\label{zerospzs}
Let $p_n(z,s)$ be given above. Then $z_0$ is a zero of $p_n(z,s_1)$ if and only if $s^{-1}z_0s$ is a zero of $p_n(z,s_1)$,  where $s=s_1^{-1}s_2$.
Besides $p_n(z,s_1)$ and $p_n(z,s_2)$ have the same real and spherical zeros.
Let  $Z_{iso}(s)$ be the totality of non-real isolated zeros of  $p_n(z,s)$. Then $$Z_{iso}(s_2)=\left\{s^{-1}zs: s=s_1^{-1}s_2,\ z\in Z_{iso}(s_1)\right\}.$$ In particular, 
$Z_{iso}(s_2)= Z_{iso}(s_1)$ provided that $s_1^{-1}s_2\in\mathbb{R}$.
\end{lemma}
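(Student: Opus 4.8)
The plan is to derive the whole lemma from a single substitution identity. Put $s=s_1^{-1}s_2$, so that $s_2=s_1 s$ and $s$ is invertible (the case $s_1=0$ or $s_2=0$ being vacuous). Using the telescoping relation $(s^{-1}z_0 s)^k=s^{-1}z_0^k s$, a direct computation gives
\[
p_n(s^{-1}z_0 s,\,s_2)=\sum_{k=0}^{n}c_k s_1 s\,(s^{-1}z_0 s)^k=\sum_{k=0}^{n}c_k s_1 z_0^k\,s=p_n(z_0,s_1)\,s .
\]
Since $s\neq 0$, this identity shows at once that $z_0$ is a zero of $p_n(z,s_1)$ if and only if $s^{-1}z_0 s$ is a zero of $p_n(z,s_2)$; equivalently, the conjugation map $\kappa_s\colon q\mapsto s^{-1}q s$ is a bijection from the zero set of $p_n(z,s_1)$ onto the zero set of $p_n(z,s_2)$, with inverse $w\mapsto s w s^{-1}=\kappa_{s^{-1}}(w)$.

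Next I would record two elementary properties of $\kappa_s$. First, $\kappa_s(q)\in\mathbb{R}$ if and only if $q\in\mathbb{R}$: if $q$ is real it is central and $\kappa_s(q)=q$, while if $\kappa_s(q)=r\in\mathbb{R}$ then $q=\kappa_{s^{-1}}(r)=r\in\mathbb{R}$. Second, $\kappa_s$ maps each similarity orbit $\theta(q)$ bijectively onto itself, since $\kappa_s(\alpha^{-1}q\alpha)=(\alpha s)^{-1}q(\alpha s)\in\theta(q)$ and $\kappa_{s^{-1}}$ is its inverse. The first property, combined with the bijection of zero sets above, forces $p_n(z,s_1)$ and $p_n(z,s_2)$ to have exactly the same real zeros. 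For spherical zeros: if $z_0\notin\mathbb{R}$ is a spherical zero of $p_n(z,s_1)$, then every point of $\theta(z_0)$ is a zero of $p_n(z,s_1)$; applying $\kappa_s$ and using that it permutes $\theta(z_0)$, every point of $\theta(z_0)$ is a zero of $p_n(z,s_2)$, so $z_0\in\theta(z_0)$ is a spherical zero of $p_n(z,s_2)$. The reverse implication is symmetric.

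Finally, the statement about $Z_{iso}$ follows by elimination: $\kappa_s$ carries the non-real zeros of $p_n(z,s_1)$ bijectively onto the non-real zeros of $p_n(z,s_2)$, and by the previous paragraph it restricts to a bijection between the spherical zeros of the two polynomials, hence also to a bijection between the non-real isolated zeros, i.e. $Z_{iso}(s_2)=\{s^{-1}z s:\,z\in Z_{iso}(s_1)\}$. When $s=s_1^{-1}s_2\in\mathbb{R}$, taking $z_0=z$ in the identity above (and $s^{-1}zs=z$) gives $p_n(z,s_2)=p_n(z,s_1)\,s$, so the zero sets coincide and $Z_{iso}(s_2)=Z_{iso}(s_1)$. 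I do not anticipate a genuine obstacle; the only point needing a little care is that "isolated'' is defined negatively (non-real and not spherical), so the $Z_{iso}$ claim must be obtained by complementation and therefore really depends on first settling the real- and spherical-zero parts.
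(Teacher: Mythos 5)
Your proof is correct and follows essentially the same route as the paper: the single substitution identity $p_n(s^{-1}z_0s,s_2)=p_n(z_0,s_1)s$ followed by a case analysis of real, spherical, and isolated zeros via the conjugation map $q\mapsto s^{-1}qs$. The only difference is that you spell out the bijection/complementation argument for $Z_{iso}$ (which the paper leaves implicit), and you correctly read the paper's ``$s^{-1}z_0s$ is a zero of $p_n(z,s_1)$'' as a typo for $p_n(z,s_2)$.
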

\begin{proof}
	For any fixed $z_0\in\H$, we have that $(s^{-1}z_0s)^k=s^{-1}z_0^ks$. It follows that 
	\begin{equation*} 
p_n(s^{-1}z_0s,s_2)=\sum_{k=0}^{n}c_ks_2(s^{-1}z_0s)^k = \sum_{k=0}^{n}c_ks_2s^{-1}z_0^ks =  \sum_{k=0}^{n}c_ks_2s_2^{-1}s_1z_0^ks=p_n(z_0,s_1)s.
	\end{equation*}
	Therefore, $z_0$ is a zero of	$p_n(z,s_1)$ if and only if $s^{-1}z_0s$ is a zero of $p_n(z,s_1)$. If $z_0$ is real, then  $s^{-1}z_0s=z_0$. Thus $p_n(z,s_1)$ and $p_n(z,s_2)$ have the same real zeros. Since $z_0$ and $s^{-1}z_0s$ belong to the similarity orbit $\theta(z_0)$, then $p_n(z,s_1)$ and $p_n(z,s_2)$ have the same spherical zeros. They have the same non-real isolated zeros if  $s_1^{-1}s_2\in\mathbb{R}$.
\end{proof}

\section{The sampling theorem}\label{S3}
We  study the quaternion difference equation
\begin{equation}\label{diffequ}
b(k)x(k+1)+a(k)x(k)+b(k-1)x(k-1) =x(k)\lambda,\quad k=1,2,\dots,N,
\end{equation}
where   $a(k)\in\H,\  b(k)\in\H\setminus\{0\}$ for $0\leq k\leq N$ and  $\lambda\in \H$ is a parameter.  The equations of type (\ref{diffequ}), which have two-sided coefficients, are difficult to  solve \cite{Caikou2018}. To solve (\ref{diffequ}), we need to consider  boundary conditions of the form
\begin{align}
&x(0)+h_1x(1)=0 \label{BV1}\\
&x(N+1)+h_2x(N)=0 \label{BV2}
\end{align}
where $h_1, h_2$ are quaternion numbers. Note that $x(1)$ and $x(N)$ cannot be zero, otherwise there is only a trivial solution ($x(k)=0$) for the boundary value problem (BVP) (\ref{diffequ})--(\ref{BV2}). Therefore if $h_1=0$ (which implies that $x(0)=0$), we have to restrict $x(1)\neq 0$.  Similar considerations are needed for the cases when $h_2=0$ or $h_1=h_2=0$.

 For fixed $h_1, h_2\in\H$, we define an operator $\pL$ for any $\bx=[x(1),x(2),\dots,x(N)]^{\top}\in \H^n$ as
\begin{equation}\label{diffoperator}
(\pL\bx)(k) := b(k)x(k+1)+a(k)x(k)+b(k-1)x(k-1), \quad k=1,2,\dots,N,
\end{equation}
where $x(0), x(N+1)$ involved in $\pL$ are taken from  (\ref{BV1}) and (\ref{BV2}).
In the traditional case, the BVP (\ref{diffequ})--(\ref{BV2}) is called a regular Sturm-Liouville problem if  $a(k), b(k), h_1, h_2$ are restricted to be real and the corresponding operator $\pL$ is self-adjoint. In the present study, $\pL$ is not necessarily to be self-adjoint.  In fact, by Theorem \ref{spectral1}, we know that if $\pL$ is normal, then it can produce an orthonormal basis whose elements are solutions of 
(\ref{diffequ}).

Since $\pL$ is a right $\H$-linear operator on $\H^N$, it has a matrix form  
\begin{equation*}
L=\begin{pmatrix}
d_1&b(1)  &  &  & &\\ 
b(1)& a(2) & b(2) & & \text{\huge 0}& \\ 
& b(2) &a(3) &(b(3))&&\\
&&\ddots&\ddots&\ddots&\\
& \text{\huge 0} &  & b(N-2) & a(N-1)&b(N-1)\\ 
&  &  & &b(N-1) & d_2
\end{pmatrix}, 
\end{equation*}
where $d_1 = a(1)-b(0)h_1$ and $d_2=a(N)-b(N)h_2$. It follows that $\pL$ is normal if and only if $L$ is normal. Next we introduce a necessary and sufficient condition for $L$ to be normal.
\begin{theorem}\label{equivcond}
Let $L =L_0 + \qi L_1 +\qj L_2+ \qk L_3\in M_N(\H)$ be a symmetric quaternion matrix with $L_0, L_1, L_2, L_3$ being real matrices. Then $L$ is normal if and only if $L_0$ is   commutative with $L_1, L_2, L_3$ respectively.
\end{theorem}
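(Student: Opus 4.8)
The plan is to turn the matrix identity $L^*L=LL^*$ into commutation relations among the real matrices $L_0,L_1,L_2,L_3$ by a direct expansion. The first step is to pin down the shape of $L^*$: since $L$ is symmetric with real components, $L^{\top}=L$ forces each $L_m=L_m^{\top}$, so that $L^*=\overline{L^{\top}}=\overline{L}=L_0-\qi L_1-\qj L_2-\qk L_3$. Thus normality, $L^*L=LL^*$, is an equality between two quaternion matrices, and I can compare their scalar, $\qi$-, $\qj$- and $\qk$-components one at a time. Throughout the computation I will use repeatedly that the \emph{real} matrices $L_m$ commute with $\qi,\qj,\qk$, so that a product like $(\qi L_1)(\qj L_2)$ equals $(\qi\qj)(L_1L_2)=\qk\,L_1L_2$ with the order of the matrix factors preserved.

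The second step is the expansion itself, using Hamilton's rules. One checks first that $LL^*$ and $L^*L$ share the same scalar part, $L_0^2+L_1^2+L_2^2+L_3^2$. Each imaginary component of either product is a sum of two kinds of terms: a ``scalar'' pair (the $1$-part times an imaginary part) and a ``cross'' pair (two distinct imaginary parts) --- for instance the $\qi$-component collects the $\{1,\qi\}$ pair and the $\{\qj,\qk\}$ pair, the $\qj$-component collects $\{1,\qj\}$ and $\{\qk,\qi\}$, and the $\qk$-component collects $\{1,\qk\}$ and $\{\qi,\qj\}$. The key observation is that passing from $L$ to $L^*$ negates precisely the $\qi,\qj,\qk$-parts, so each cross pair contributes the \emph{same} expression to $LL^*$ and to $L^*L$: e.g.\ $(\qj L_2)(-\qk L_3)$ appearing in $LL^*$ and $(-\qj L_2)(\qk L_3)$ appearing in $L^*L$ both equal $-\qi\,L_2L_3$. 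Hence the cross-pair terms cancel in $LL^*-L^*L$, whereas the scalar pairs change sign, so that the $\qi$-component of $LL^*-L^*L$ is exactly $2(L_1L_0-L_0L_1)$, and the $\qj$- and $\qk$-components are $2(L_2L_0-L_0L_2)$ and $2(L_3L_0-L_0L_3)$.

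The final step is the conclusion: $L$ is normal iff $LL^*-L^*L=0$, which by the previous step holds iff $L_0L_1=L_1L_0$, $L_0L_2=L_2L_0$ and $L_0L_3=L_3L_0$, i.e.\ iff $L_0$ commutes with each of $L_1,L_2,L_3$. I do not anticipate a genuine obstacle here; the only delicate part is bookkeeping the signs coming from $\qi\qj=-\qj\qi$ and its cyclic companions, and the one point worth isolating cleanly in the write-up is the cancellation of the cross terms, since that is exactly what leaves only the commutators of $L_0$ with the other three matrices.
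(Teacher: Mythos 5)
Your proof is correct. With symmetry forcing $L^{*}=\overline{L}=L_0-\qi L_1-\qj L_2-\qk L_3$, your expansion of $LL^{*}-L^{*}L$ checks out: the scalar parts agree, every cross pair between two distinct imaginary components contributes identically to $LL^{*}$ and $L^{*}L$ and so cancels in the difference, and what remains is $2\qi(L_1L_0-L_0L_1)+2\qj(L_2L_0-L_0L_2)+2\qk(L_3L_0-L_0L_3)$; since these components are real matrices, the difference vanishes precisely when each commutator does. The paper reaches the same endpoint without any expansion: it notes that $LL^{*}=L^{*}L$ is equivalent to $(L+L^{*})(L-L^{*})=(L-L^{*})(L+L^{*})$, and since $L+L^{*}=2L_0$ and $L-L^{*}=2(\qi L_1+\qj L_2+\qk L_3)$, normality becomes commutation of $L_0$ with $\qi L_1+\qj L_2+\qk L_3$, hence (comparing quaternion components, $L_0$ being real and thus commuting with the units) with each of $L_1,L_2,L_3$. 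The factorization trick packages exactly the cancellation you verify by hand: the cross products among $L_1,L_2,L_3$ live only in products of $L-L^{*}$ with itself, which never enter the commutator of $L+L^{*}$ with $L-L^{*}$. Your route is more computational but yields an explicit formula for $LL^{*}-L^{*}L$; the paper's is shorter and avoids the sign bookkeeping from Hamilton's rules.
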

\begin{proof}
By direct computations, we have that $$LL^* - L^*L = 0 \Leftrightarrow LL^* - L^*L =  L^*L - LL^* \Leftrightarrow (L+L^*)(L-L^*)=(L-L^*)(L+L^*).$$
Since $L$ is symmetric, then $L_0, L_1, L_2, L_3$ are symmetric. It follows that $$L^* = L_0^{\top} -\qi L_1^{\top} -\qj L_2^\top  -\qk L_3^\top=L_0 -\qi L_1 -\qj L_2  -\qk L_3.$$
Therefore $L+L^*=2L_0$ and $L-L^*= 2\qi L_1+2\qj L_2  +2\qk L_3$. It follows that
 $L+L^*$ is commutative with $L-L^*$ if and only if $L_0$ is   commutative with $L_1, L_2, L_3$. The proof is complete.
\end{proof}


Let $\phi(k,\lambda,s)$ ($0\leq k \leq N+1$) be the unique solution of   (\ref{diffequ})--(\ref{BV1}) satisfying
$\phi(1,\lambda,s)=s$. By direct computations, we have that $\phi(k,\lambda,s)$ is a simple quaternion polynomial  with the form of $\sum_{j=0}^{k}c(j,k)s\lambda^{k}$ for 
$k\geq 1$, where $c(0,k), c(1,k),\dots, c(k,k)$ are  undetermined coefficients. 
Therefore, for $\phi(k,\lambda_0,s)$ to be the solution of (\ref{diffequ})--(\ref{BV2}), it is necessary that $\lambda_0$ is a zero of polynomial $\phi(N+1,\lambda,s)+h_2\phi(N,\lambda,s)$.

Let $$p_N(\lambda,s)=\phi(N+1,\lambda,s)+h_2\phi(N,\lambda,s)$$ and $$\bvphi(\lambda,s)=\left[\phi(1,\lambda,s),\phi(2,\lambda,s),\dots,\phi(N,\lambda,s)\right]^\top.$$ The relationship between $\bvphi(\lambda,s)$ and $L$  is described next.
%

\begin{theorem}\label{relation}
	Let $p_N(\lambda,s), \bvphi(\lambda,s), L$ be given above. Then  for $s_0\in\H\setminus\{0\}$ the following  assertions are equivalent.
	\begin{enumerate}
		\item $\bvphi(\lambda_0,s_0)$ is a solution of (\ref{diffequ})--(\ref{BV2}).
		\item  $L\bvphi(\lambda_0,s_0)=\bvphi(\lambda_0,s_0) \lambda_0$.
		\item   $p_N(\lambda_0,s_0)=0$.
	\end{enumerate}
\end{theorem}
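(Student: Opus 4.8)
\noindent\emph{Proof plan.} The guiding observation is that, by construction, $\phi(k,\lambda_0,s_0)$ satisfies the recurrence (\ref{diffequ}) for $k=1,\dots,N$ together with the left boundary condition (\ref{BV1}), and this holds for \emph{every} value of the parameter $\lambda_0$. Hence the only additional information carried by each of the three assertions is whether the right boundary condition (\ref{BV2}) holds. The plan is to establish $(1)\Leftrightarrow(3)$ almost directly and $(2)\Leftrightarrow(3)$ by tracking how the boundary data are absorbed into the matrix $L$.

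\smallskip
\noindent\emph{Step 1.} I first settle $(1)\Leftrightarrow(3)$. Since $\phi(\cdot,\lambda_0,s_0)$ solves (\ref{diffequ}) and (\ref{BV1}) regardless of $\lambda_0$, the sequence $\{\phi(k,\lambda_0,s_0)\}_{k=0}^{N+1}$ is a solution of the boundary value problem (\ref{diffequ})--(\ref{BV2}) exactly when it also satisfies (\ref{BV2}), that is, when
\[
\phi(N+1,\lambda_0,s_0)+h_2\phi(N,\lambda_0,s_0)=0,
\]
which is precisely $p_N(\lambda_0,s_0)=0$. It remains to check that such a solution is automatically nontrivial and meets the nondegeneracy requirements pointed out after (\ref{BV2}): the hypothesis $s_0\neq0$ gives $\phi(1,\lambda_0,s_0)=s_0\neq0$; and were $p_N(\lambda_0,s_0)=0$ to force $\phi(N,\lambda_0,s_0)=0$, then $\phi(N+1,\lambda_0,s_0)=0$ as well, so running (\ref{diffequ}) backwards from $k=N$ (each coefficient $b(k)$, $0\le k\le N$, being invertible) would yield $\phi(k,\lambda_0,s_0)=0$ for all $k$, contradicting $\phi(1,\lambda_0,s_0)\neq0$; hence $\phi(N,\lambda_0,s_0)\neq0$ too.

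\smallskip
\noindent\emph{Step 2.} Next I settle $(2)\Leftrightarrow(3)$. Recall that $L$ arises from the operator $\pL$ of (\ref{diffoperator}) by eliminating the boundary values through $x(0)=-h_1x(1)$ and $x(N+1)=-h_2x(N)$, read off from (\ref{BV1})--(\ref{BV2}); this is exactly what produces the corner entries $d_1=a(1)-b(0)h_1$ and $d_2=a(N)-b(N)h_2$, so that $(L\bw)(k)=(\pL\bw)(k)$ for every $\bw\in\H^N$ once $w(0),w(N+1)$ are taken from those substitutions. Apply this with $\bw=\bvphi(\lambda_0,s_0)$. For $k=2,\dots,N-1$, the $k$-th component of $L\bvphi(\lambda_0,s_0)=\bvphi(\lambda_0,s_0)\lambda_0$ is literally the recurrence (\ref{diffequ}) at $k$, which $\phi$ satisfies; for $k=1$ it is again literally (\ref{diffequ}) at $k=1$, because $\phi$ genuinely obeys (\ref{BV1}), so its true value $\phi(0,\lambda_0,s_0)$ equals the substituted $-h_1\phi(1,\lambda_0,s_0)$. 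Only the $k=N$ component can fail, for there $L$ uses $-h_2\phi(N,\lambda_0,s_0)$ in place of the true $\phi(N+1,\lambda_0,s_0)$; subtracting the genuine relation (\ref{diffequ}) at $k=N$ yields
\[
(L\bvphi(\lambda_0,s_0))(N)-\phi(N,\lambda_0,s_0)\lambda_0=-b(N)\left(\phi(N+1,\lambda_0,s_0)+h_2\phi(N,\lambda_0,s_0)\right)=-b(N)\,p_N(\lambda_0,s_0).
\]
Since $b(N)\neq0$, the identity $L\bvphi(\lambda_0,s_0)=\bvphi(\lambda_0,s_0)\lambda_0$ holds if and only if $p_N(\lambda_0,s_0)=0$. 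Combined with Step 1 this proves the three assertions equivalent.

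\smallskip
\noindent I do not anticipate a genuine obstacle; the only point demanding care is the asymmetry between the two boundary conditions. The sequence $\phi$ satisfies (\ref{BV1}) by construction, so the first row of $L$ is automatically consistent with (\ref{diffequ}), whereas (\ref{BV2}) is a bona fide constraint, which is why the condition $p_N(\lambda_0,s_0)=0$ surfaces only through the last row. Keeping the sign conventions for $d_1,d_2$ in step with (\ref{BV1})--(\ref{BV2}), recording that $b(N)\neq0$, and keeping $\lambda_0$ on the right throughout are the only bookkeeping precautions; beyond this the non-commutativity of $\H$ does not interfere.
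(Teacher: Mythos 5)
Your proof is correct and follows essentially the same route as the paper: both arguments rest on the observation that $\bvphi(\lambda_0,s_0)$ satisfies (\ref{diffequ}) and (\ref{BV1}) by construction, so that the boundary data are absorbed into $L$ (through $d_1,d_2$) and into $p_N$, leaving (\ref{BV2}) as the only live constraint. The only organizational difference is that you establish $(1)\Leftrightarrow(3)$ and $(2)\Leftrightarrow(3)$, with the explicit last-row residual $-b(N)\,p_N(\lambda_0,s_0)$, whereas the paper proves $(1)\Leftrightarrow(2)$ and $(1)\Leftrightarrow(3)$ by the same elimination idea stated more tersely; your computation simply makes explicit what the paper's substitution argument leaves implicit.
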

\begin{proof}
If we  regard (\ref{diffequ})--(\ref{BV2}) as a system of equations, then it   involves $N+2$ equations and $N+3$ variables $x(0), x(1), \dots, x(N+1), \lambda$. Solving (\ref{BV1}) and (\ref{BV2})  with respect to $x(0)$ and $x(N+1)$  and plugging the results into (\ref{diffequ}) yields the equation $L\bx=\bx\lambda$. Thus statement 1 is equivalent to statement 2. Similarly if we solve (\ref{diffequ})--(\ref{BV1}) with respect to  $x(N)$ and $x(N+1)$ and plug the results into (\ref{BV2}), then  (\ref{BV2}) becomes $p_N(\lambda,s_0)=0$. It follows that statement 1 is equivalent to  statement 3. Thus the three assertions are equivalent.
\end{proof}

As a consequence of Theorem \ref{relation}, we obtain a corollary as follows.
\begin{corollary}\label{corroll1}
Let $p_N(\lambda,s), \bvphi(\lambda,s), L$ be given above. Then the following assertions hold.
\begin{enumerate}
 \item If $\bxi=(\xi_1,\xi_2,\dots,\xi_N)^{\top}$ is an eigenvector of $L$ corresponding to the eigenvalue $\lambda_0\in\H$, then $\xi_1\neq 0$, $\bvphi(\lambda_0,\xi_1)=\bxi$. Moreover,  $\bvphi(\lambda_0,\xi_1)$ is a solution of (\ref{diffequ})--(\ref{BV2})  and therefore $p_N(\lambda_0,\xi_1)=0$.
 \item If $p_N(\lambda_0,s_0)=0$ for $s_0\neq 0$, then  $\bvphi(\lambda_0,s_0)s_1= \bvphi(s_1^{-1}\lambda_0s_1,s_0s_1)$ for every $s_1\in\H\setminus\{0\}$.
\end{enumerate}
\end{corollary}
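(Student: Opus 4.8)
The plan is to read off both statements from Theorem \ref{relation}, using in addition the explicit polynomial form of $\phi(k,\lambda,s)$ and the fact that $b(k)\neq 0$ makes (\ref{diffequ}) a genuine second-order recursion. For part 1, start from $L\bxi=\bxi\lambda_0$ with $\bxi\neq 0$ and unwind it row by row: recalling that the $(1,1)$ entry of $L$ is $d_1=a(1)-b(0)h_1$, the first row is precisely equation (\ref{diffequ}) at $k=1$ once one sets $x(0):=-h_1\xi_1$ in accordance with (\ref{BV1}); the interior rows reproduce (\ref{diffequ}) for $k=2,\dots,N-1$; and the last row, with $d_2=a(N)-b(N)h_2$, is (\ref{diffequ}) at $k=N$ with $x(N+1):=-h_2\xi_N$ from (\ref{BV2}). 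I would then rule out $\xi_1=0$: if $\xi_1=0$, then also $x(0)=-h_1\xi_1=0$, and the rewritten recursion $x(k+1)=b(k)^{-1}\big(x(k)\lambda_0-a(k)x(k)-b(k-1)x(k-1)\big)$ forces $\xi_2=\cdots=\xi_N=0$, contradicting $\bxi\neq 0$. Hence $\xi_1\neq 0$, and the sequence $x(0)=-h_1\xi_1,\ x(k)=\xi_k\ (1\le k\le N),\ x(N+1)=-h_2\xi_N$ is a solution of (\ref{diffequ})--(\ref{BV1}) with $x(1)=\xi_1$; by the uniqueness defining $\phi$ this gives $\xi_k=\phi(k,\lambda_0,\xi_1)$ for all $k$, i.e. $\bxi=\bvphi(\lambda_0,\xi_1)$.

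Having identified $\bxi=\bvphi(\lambda_0,\xi_1)$, the equation $L\bvphi(\lambda_0,\xi_1)=\bvphi(\lambda_0,\xi_1)\lambda_0$ is statement 2 of Theorem \ref{relation} with $s_0=\xi_1\neq 0$, so statements 1 and 3 of that theorem hold; this yields that $\bvphi(\lambda_0,\xi_1)$ solves (\ref{diffequ})--(\ref{BV2}) and that $p_N(\lambda_0,\xi_1)=0$, finishing part 1. For part 2 I would work directly from the form $\phi(k,\lambda,s)=\sum_{j=0}^{k}c(j,k)\,s\,\lambda^{j}$ ($1\le k\le N$) with fixed quaternion coefficients $c(j,k)$, exactly as in the proof of Lemma \ref{zerospzs}. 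Since $(s_1^{-1}\lambda_0 s_1)^{j}=s_1^{-1}\lambda_0^{j}s_1$, for each $k$ one computes
\[
\phi(k,\,s_1^{-1}\lambda_0 s_1,\,s_0 s_1)=\sum_{j=0}^{k}c(j,k)\,(s_0 s_1)(s_1^{-1}\lambda_0 s_1)^{j}=\sum_{j=0}^{k}c(j,k)\,s_0\,\lambda_0^{j}\,s_1=\phi(k,\lambda_0,s_0)\,s_1,
\]
and collecting the components $k=1,\dots,N$ gives $\bvphi(\lambda_0,s_0)s_1=\bvphi(s_1^{-1}\lambda_0 s_1,s_0 s_1)$.

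An equivalent route for part 2, which makes explicit use of the hypothesis, is: by Theorem \ref{relation}, $p_N(\lambda_0,s_0)=0$ gives $L\bvphi(\lambda_0,s_0)=\bvphi(\lambda_0,s_0)\lambda_0$, hence $L\big(\bvphi(\lambda_0,s_0)s_1\big)=\big(\bvphi(\lambda_0,s_0)s_1\big)(s_1^{-1}\lambda_0 s_1)$; since the first component of $\bvphi(\lambda_0,s_0)s_1$ is $\phi(1,\lambda_0,s_0)s_1=s_0 s_1\neq 0$, part 1 applied to this eigenvector identifies it with $\bvphi(s_1^{-1}\lambda_0 s_1,s_0 s_1)$. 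Given Theorem \ref{relation}, neither part is deep; the points that need care are the boundary-condition bookkeeping that converts the matrix eigen-equation $L\bx=\bx\lambda$ into the difference equation together with (\ref{BV1})--(\ref{BV2}), the verification $\xi_1\neq 0$ (the only place where $b(k)\neq 0$ is essential), and, in part 2, keeping the non-commutative products in the right order so that the conjugation $s_1^{-1}(\cdot)s_1$ passes cleanly through the coefficients $c(j,k)$.
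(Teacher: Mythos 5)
Your proof is correct, and its core coincides with the paper's. Part 1 is essentially the paper's argument: the paper also rules out $\xi_1=0$ by propagating zeros through the rows of $L$ (using $b(k)\neq 0$), then extends $\bxi$ by $\xi_0=-h_1\xi_1$, $\xi_{N+1}=-h_2\xi_N$ and invokes uniqueness of $\phi$ to get $\bvphi(\lambda_0,\xi_1)=\bxi$, with Theorem \ref{relation} delivering $p_N(\lambda_0,\xi_1)=0$; your row-by-row bookkeeping with $d_1=a(1)-b(0)h_1$ and $d_2=a(N)-b(N)h_2$ is the same computation made explicit. For part 2 you give two routes: the second (conjugate the eigen-equation, note the first entry of $\bvphi(\lambda_0,s_0)s_1$ is $s_0s_1\neq 0$, and apply part 1) is exactly the paper's proof, while the first — the direct computation from the polynomial form $\phi(k,\lambda,s)=\sum_{j=0}^{k}c(j,k)\,s\,\lambda^{j}$ in the spirit of Lemma \ref{zerospzs} — is genuinely different and in fact slightly stronger: it shows the identity $\bvphi(\lambda_0,s_0)s_1=\bvphi(s_1^{-1}\lambda_0 s_1,s_0 s_1)$ holds for \emph{every} $\lambda_0\in\H$ and $s_0,s_1\neq 0$, with the hypothesis $p_N(\lambda_0,s_0)=0$ playing no role. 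The paper's route stays entirely within the eigenvalue framework and needs no structural information about $\phi$ beyond its defining recursion, whereas your direct route exposes that the conjugation identity is a purely algebraic consequence of the one-sided (left-coefficient) structure of the solutions; both are valid, and the unconditional version is a worthwhile observation.
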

\begin{proof}
1. We mentioned previously that $x(1)$ in (\ref{diffequ})--(\ref{BV1}) cannot be zero, otherwise there is only a trivial solution for the equations. We can also see this fact from matrix $L$ directly. Since $L\bxi=\bxi\lambda_0$, then $d_1\xi_1+b(1)\xi_2=\xi_1\lambda_0$.  It follows that $\xi_1=0$ will result in $\xi_2=0$, as $b(k)\in\H\setminus\{0\}$ for $0\leq k\leq N$.  By similar arguments, we have $\xi_3=\xi_4=\cdots=\xi_N=0$ which leads to a contradiction with the  assumption that $\bxi$ is an eigenvector of $L$. Define $\xi_0=-h_1\xi_1$ and $\xi_{N+1}=-h_2\xi_N$, then
$x(k)=\xi_k$ ($0\leq k\leq N+1$) is a solution of  (\ref{diffequ})--(\ref{BV2}) satisfying 
$x(1)=\xi_1$. By the definition of $\bvphi$, we conclude that $\bvphi(\lambda_0,\xi_1)=\bxi$ for the uniqueness of the solution.

2. If $p_N(\lambda_0,s_0)=0$ for $s_0\neq 0$, then $L\bvphi(\lambda_0,s_0)=\bvphi(\lambda_0,s_0) \lambda_0$ by Theorem \ref{relation}. It follows that
\begin{equation*}
L\bvphi(\lambda_0,s_0)s_1=\bvphi(\lambda_0,s_0) s_1s_1^{-1}\lambda_0s_1.
\end{equation*}
Let $\bmeta=(\eta_1,\eta_2,\dots,\eta_N)^{\top}=\bvphi(\lambda_0,s_0)s_1$ and $\lambda_1=s_1^{-1}\lambda_0s_1$, then $L\bmeta=\bmeta \lambda_1$. By statement 1 and note that $\eta_1=s_0s_1$, we conclude that
\begin{equation*}
\bvphi(s_1^{-1}\lambda_0s_1,s_0s_1)=\bvphi(\lambda_1,\eta_1)=\bmeta =\bvphi(\lambda_0,s_0)s_1.
\end{equation*}
The proof is complete.
\end{proof}

\begin{remark}
From the statement 1 of Corollary \ref{corroll1}, we see that if $\lambda_0$ is an eigenvalue of $L$,    there always exists $s_0\neq 0$ such that $\bvphi(\lambda_0,s_0)$ is a solution of  (\ref{diffequ})--(\ref{BV2}), where $s_0$ can be the first element of any eigenvector of  $L$  corresponding to  $\lambda_0$. Additionally, we can further find $t_1, t_2, \dots$ such that 
$\bvphi(\lambda_0, t_1), \bvphi(\lambda_0, t_2), \dots$ are  solutions of (\ref{diffequ})--(\ref{BV2})  by the statement 2 of Corollary \ref{corroll1}. This can be seen by setting $t_k=s_0s_k$, where $s_k, \ k=1,2,\dots$ are   unequal nonzero real numbers. In particular, if  $\lambda_0$ is real, then $\bvphi(\lambda_0,s)$ is a solution of  (\ref{diffequ})--(\ref{BV2}) for every $s\in\H\setminus\{0\}$.
\end{remark}

The above result states that for any fixed  eigenvalue  $\lambda_0$, there exist infinitely many $t_1, t_2, \dots$ such that 
$\bvphi(\lambda_0, t_1), \bvphi(\lambda_0, t_2), \dots$ are  solutions of (\ref{diffequ})--(\ref{BV2}). 
 We will show that for any fixed $s\in\H\setminus\{0\}$, there exist $N$ distinct quaternion numbers $\lambda_1,\lambda_2,\dots,\lambda_N$ such that
$\bvphi(\lambda_1, s)$,  $\bvphi(\lambda_2, s)$,  $\dots$, $\bvphi(\lambda_N, s)$  are solutions of 
(\ref{diffequ})--(\ref{BV2}).
Furthermore, these solutions  constitute  an orthogonal basis of $\H^N$. 
\begin{theorem}\label{spetrumdiffequ}
Suppose that $\pL$ (or equivalently $L$) is normal.	Then for any fixed $s\in\H\setminus\{0\}$, there exist $N$ distinct quaternion numbers $\lambda_1,\lambda_2,\dots,\lambda_N$ such that
\begin{enumerate}
	\item $V=\left\{\bvphi(\lambda_1, s), \bvphi(\lambda_2, s),\dots, \bvphi(\lambda_N, s)\right\}$ is  an orthogonal basis of $\H^N$; 
	\item  every element of $V$ is a solution of (\ref{diffequ})--(\ref{BV2}), therefore
	$\lambda_k$ is an eigenvalue of $L$ for $k=1,2,\dots,N$.
\end{enumerate} 
\end{theorem}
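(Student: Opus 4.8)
The plan is \emph{not} to search for the numbers $\lambda_k$ among the zeros of the polynomial $p_N(\lambda,s)$ and then count similarity orbits, but rather to produce the vectors $\bvphi(\lambda_k,s)$ directly: take an orthonormal eigenbasis of $L$, rescale each basis vector on the right so that its first coordinate becomes $s$, and let the $\lambda_k$ appear automatically as the conjugated standard eigenvalues. Orthogonality will be preserved by the rescaling, and distinctness of the $\lambda_k$ will come for free from the uniqueness built into $\bvphi(\cdot,\cdot)$.

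First I would use normality. Since $\pL$ (equivalently $L$) is normal, Theorem \ref{spectral1} furnishes an orthonormal basis $\{\bu_1,\dots,\bu_N\}$ of $\H^N$ together with $\mu_1,\dots,\mu_N\in\C^+$ such that $L\bu_k=\bu_k\mu_k$ for $1\leq k\leq N$. By statement 1 of Corollary \ref{corroll1}, the first component $\xi_1^{(k)}$ of $\bu_k$ is nonzero, so for the fixed $s\in\H\setminus\{0\}$ I may set $\alpha_k:=(\xi_1^{(k)})^{-1}s\in\H\setminus\{0\}$ and put $\bw_k:=\bu_k\alpha_k$, $\lambda_k:=\alpha_k^{-1}\mu_k\alpha_k$. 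Then $L\bw_k=(L\bu_k)\alpha_k=\bu_k\mu_k\alpha_k=\bw_k\lambda_k$, so $\bw_k$ is an eigenvector of $L$ for $\lambda_k$ whose first coordinate equals $\xi_1^{(k)}\alpha_k=s$; hence Corollary \ref{corroll1}(1) gives $\bw_k=\bvphi(\lambda_k,s)$, this vector solves (\ref{diffequ})--(\ref{BV2}), and $\lambda_k\in\sigma(L)$. This is precisely statement 2 of the theorem, once we know the $\lambda_k$ are $N$ distinct quaternions.

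Statement 1 is then routine. Since right multiplication is compatible with the inner product, $\langle\bw_j,\bw_k\rangle=\overline{\alpha_j}\langle\bu_j,\bu_k\rangle\alpha_k=0$ whenever $j\neq k$, while each $\bw_k\neq0$ (indeed $\norm{\bw_k}=\abs{\alpha_k}$). Pairing a vanishing right-$\H$-combination $\sum_k\bw_k\beta_k=0$ with $\bw_j$ forces $\norm{\bw_j}^2\beta_j=0$, hence $\beta_j=0$ for all $j$, so $V=\{\bw_1,\dots,\bw_N\}$ is $\H$-independent and therefore an orthogonal basis of the $N$-dimensional space $\H^N$.

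The one point that really needs care is the distinctness of $\lambda_1,\dots,\lambda_N$, because the standard eigenvalues $\mu_k$ may repeat, so one cannot simply invoke ``distinct eigenvectors give distinct eigenvalues''. Here I would argue: if $\lambda_j=\lambda_k$ for some $j\neq k$, then by the very definition of $\bvphi$ --- the uniqueness of the solution of (\ref{diffequ})--(\ref{BV1}) with prescribed value $s$ at index $1$ --- we get $\bvphi(\lambda_j,s)=\bvphi(\lambda_k,s)$, that is $\bw_j=\bw_k$, i.e. $\bu_j\alpha_j-\bu_k\alpha_k=0$ with $\alpha_j\neq0$, contradicting the $\H$-independence of $\{\bu_j,\bu_k\}$. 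Hence the $\lambda_k$ are pairwise distinct, which finishes the proof. I expect this to be the only step worth spelling out carefully; the rest is bookkeeping around Corollary \ref{corroll1} and Theorem \ref{spectral1}, and the temptation to instead argue via the number and types of zeros of $p_N(\lambda,s)$ should be resisted, since reading distinctness off the eigenvectors is cleaner.
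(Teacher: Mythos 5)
Your proposal is correct and follows essentially the same route as the paper: diagonalize $L$ via the spectral theorem, rescale each orthonormal eigenvector on the right by $t_k=u_{1k}^{-1}s$ so its first coordinate becomes $s$, identify the result with $\bvphi(\lambda_k,s)$ for $\lambda_k=t_k^{-1}\mu_k t_k$ via Corollary \ref{corroll1}, and deduce distinctness of the $\lambda_k$ from orthogonality and the uniqueness built into $\bvphi$. The only cosmetic difference is that you verify the eigenvector relation for $\bw_k$ directly and invoke Corollary \ref{corroll1}(1), whereas the paper cites part (2) of that corollary, which encapsulates the same computation.
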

\begin{proof}
	Since $L$ is normal, by Theorem \ref{spectral2}, there exists $U=[\bu_1,\bu_2,\dots,\bu_N]\in M_n(\H)$ such that $U^*LU=D$ where $D$ is a diagonal matrix with diagonal entries $\alpha_1, \alpha_2,\dots,\alpha_N\in \C$. Note that $U$ is unitary, we have $LU=UD$. It follows that $\bu_k=[u_{1k},u_{2k},\dots,u_{Nk}]\top$ is an eigenvector of $L$ corresponding to the eigenvalue $\alpha_k$ for $k=1,2,\dots,N$.    We may encounter   $\alpha_i=\alpha_j$ for some $i\neq j$, as it is possible that  $L$ has multiple standard eigenvalues. Thus we need to construct $\lambda_k$ from $\alpha_k$ so that $\lambda_1,\lambda_2,\dots,\lambda_N$ can be distinct.
	
	By Corollary \ref{corroll1}, we know that $u_{11},u_{12},\dots,u_{1N}$ are nonzero and 
	$ \bvphi(\alpha_k,u_{1k}) =\bu_k$ is a solution of (\ref{diffequ})--(\ref{BV2}).
	Let $t_1,t_2,\dots,t_N$ be the quaternion numbers such that 
	\begin{equation*}
	u_{11}t_1=u_{12}t_2=\cdots=u_{1N}t_N=s
	\end{equation*}
	and let $\lambda_k=t_k^{-1}\alpha_kt_k$ for $k=1,2,\dots,N$. It follows from Corollary \ref{corroll1} that
	\begin{equation*}
	\bvphi(\lambda_k, s) = \bvphi(t_k^{-1}\alpha_kt_k, u_{1k}t_k)= \bvphi(\alpha_k, u_{1k}) t_k = \bu_kt_k
	\end{equation*}
	is a solution of (\ref{diffequ})--(\ref{BV2}) for $k=1,2,\dots,N$. Note that the columns of $U$ form an orthonormal basis of $\H^N$, we have that
	$V$ is an orthogonal basis  and therefore $\lambda_1,\lambda_2,\dots,\lambda_N$  have to be unequal. Otherwise, suppose that  $\lambda_i=\lambda_j$ for some $i\neq j$, then $\bvphi(\lambda_i, s)=\bvphi(\lambda_j, s)$, which means  they cannot be  orthogonal.  	
\end{proof}

Having introduced several useful results about the solutions of   (\ref{diffequ})--(\ref{BV2}), we are in position to state the main result of this section: 
the sampling expansions associated with the quaternion difference equations.
\begin{theorem}\label{samplthm}
	Let  $\bvphi(\lambda,s)=\left[\phi(1,\lambda,s),\phi(2,\lambda,s),\dots,\phi(N,\lambda,s)\right]^\top$ be given above and suppose that $\pL$ is normal. For any fixed $s\in\H\setminus\{0\}$, let $f_s(\lambda)$ be a function defined by the discrete transform  
	\begin{equation}\label{Hmodule}
	f_s(\lambda):= \sum_{k=1}^{N} \overline{F(k)} \phi(k,\lambda,s),\quad F(k)\in\H,\quad k=1,2,\dots,N. 
	\end{equation}
	Then there exist  $\lambda_1,\lambda_2,\dots,\lambda_N\in \H$ which depend on $s$ such that
	\begin{equation}\label{samlingexpansion}
	f_s(\lambda) = \sum_{k=1}^{N}f_s(\lambda_k) \psi_k(\lambda,s)
	\end{equation}
	where $\psi_k(\lambda,s)=\frac{\left \langle\bvphi(\lambda_k,s), \bvphi(\lambda,s)\right\rangle}{\norm{\bvphi(\lambda_k,s)}^2}$.
\end{theorem}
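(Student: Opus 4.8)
The plan is to recognize the discrete transform (\ref{Hmodule}) as an inner product in $\H^N$ and then expand it in the orthogonal basis furnished by Theorem \ref{spetrumdiffequ}. First I would set $\bm{F}:=[F(1),F(2),\dots,F(N)]^{\top}\in\H^N$. Since the inner product on $\H^N$ is $\langle\bxi,\bmeta\rangle=\bxi^{*}\bmeta$, the defining formula (\ref{Hmodule}) reads precisely $f_s(\lambda)=\langle\bm{F},\bvphi(\lambda,s)\rangle$ for every $\lambda\in\H$; in particular $f_s(\lambda_k)=\langle\bm{F},\bvphi(\lambda_k,s)\rangle$ for whatever sample points $\lambda_k$ are eventually selected.

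Next, since $\pL$ (equivalently $L$) is normal, Theorem \ref{spetrumdiffequ} applies to the fixed $s$: it yields $N$ distinct quaternions $\lambda_1,\dots,\lambda_N$ such that $V=\{\bvphi(\lambda_1,s),\dots,\bvphi(\lambda_N,s)\}$ is an orthogonal basis of $\H^N$ (and each $\bvphi(\lambda_k,s)$ solves (\ref{diffequ})--(\ref{BV2}), hence each $\lambda_k$ is an eigenvalue of $L$ and, by Theorem \ref{relation}, a zero of $p_N(\cdot,s)$). Normalizing, put $\bu_k:=\bvphi(\lambda_k,s)\,\norm{\bvphi(\lambda_k,s)}^{-1}$, so that $\{\bu_1,\dots,\bu_N\}$ is an orthonormal basis of $\H^N$. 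Applying the orthonormal expansion recalled in Section \ref{S22} to the vector $\bvphi(\lambda,s)$ for an arbitrary parameter $\lambda$ gives $\bvphi(\lambda,s)=\sum_{k=1}^{N}\bu_k\langle\bu_k,\bvphi(\lambda,s)\rangle$. Substituting this into $f_s(\lambda)=\langle\bm{F},\bvphi(\lambda,s)\rangle$ and using right $\H$-linearity of $\langle\cdot,\cdot\rangle$ in its second slot (property (2)) produces $f_s(\lambda)=\sum_{k=1}^{N}\langle\bm{F},\bu_k\rangle\,\langle\bu_k,\bvphi(\lambda,s)\rangle$.

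Finally I would rewrite each factor. Because $\norm{\bvphi(\lambda_k,s)}$ is a positive real number, it commutes with every quaternion and equals its own conjugate, so $\langle\bm{F},\bu_k\rangle=\langle\bm{F},\bvphi(\lambda_k,s)\rangle\,\norm{\bvphi(\lambda_k,s)}^{-1}=f_s(\lambda_k)\,\norm{\bvphi(\lambda_k,s)}^{-1}$, while conjugate-linearity in the first slot gives $\langle\bu_k,\bvphi(\lambda,s)\rangle=\norm{\bvphi(\lambda_k,s)}^{-1}\langle\bvphi(\lambda_k,s),\bvphi(\lambda,s)\rangle$. Multiplying these and collecting the two real scalars yields $f_s(\lambda)=\sum_{k=1}^{N}f_s(\lambda_k)\,\dfrac{\langle\bvphi(\lambda_k,s),\bvphi(\lambda,s)\rangle}{\norm{\bvphi(\lambda_k,s)}^{2}}=\sum_{k=1}^{N}f_s(\lambda_k)\,\psi_k(\lambda,s)$, which is exactly (\ref{samlingexpansion}). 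The only point requiring genuine care is the bookkeeping of non-commutativity: one must invoke linearity on the right (property (2)) rather than conjugate-linearity on the left when pulling the finite sum through $\langle\bm{F},\cdot\rangle$, and one must check that the sampled value $f_s(\lambda_k)$ ends up to the far left of $\psi_k(\lambda,s)$ — which is legitimate only because the leftover coefficients $\norm{\bvphi(\lambda_k,s)}^{-1}$ are real. Everything else is routine, and the existence and spectral description of the $\lambda_k$ is inherited directly from Theorem \ref{spetrumdiffequ}.
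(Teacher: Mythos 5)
Your proposal is correct and follows essentially the same route as the paper: invoke Theorem \ref{spetrumdiffequ} for the orthogonal basis $\{\bvphi(\lambda_k,s)\}$, recognize $f_s(\lambda)=\langle \bm{F},\bvphi(\lambda,s)\rangle$, expand $\bvphi(\lambda,s)$ in that basis, and use right-linearity plus the realness of the norms to collect $f_s(\lambda_k)\psi_k(\lambda,s)$. The only cosmetic difference is that you normalize to an orthonormal basis and expand only $\bvphi(\lambda,s)$, whereas the paper keeps the orthogonal basis and also expands $\bm{F}$; the substance is identical.
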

\begin{proof}
	For any fixed $s\in\H\setminus\{0\}$, by Theorem \ref{spetrumdiffequ}, there exist 
	 $\lambda_1,\lambda_2,\dots,\lambda_N\in \H$ such that $V=\left\{\bvphi(\lambda_1, s), \bvphi(\lambda_2, s),\dots, \bvphi(\lambda_N, s)\right\}$ is  an orthogonal basis of $\H^N$. It follows that $${\bm F} =[F(1),F(2),\dots,F(N)]^\top$$ and  
	 $\bvphi(\lambda,s)$   have expansions in terms of $V$:
 \begin{align*}
{\bm F} &= \sum_{k=1}^{N}\frac{\bvphi(\lambda_k, s)}{\norm{\bvphi(\lambda_k,s)}^2}\langle \bvphi(\lambda_k, s),  {\bm F}\rangle=\sum_{k=1}^{N}\frac{\bvphi(\lambda_k, s)}{\norm{\bvphi(\lambda_k,s)}^2}\overline{f_s(\lambda_k)}, \\
\bvphi(\lambda,s)&=\sum_{k=1}^{N}\frac{\bvphi(\lambda_k, s)}{\norm{\bvphi(\lambda_k,s)}^2}\langle \bvphi(\lambda_k, s),  \bvphi(\lambda,s)\rangle 
= \sum_{k=1}^{N}\bvphi(\lambda_k, s)\psi_k(\lambda,s).
\end{align*}
It follows that
\begin{align*}
 f_s(\lambda)&= \langle {\bm F},  \bvphi(\lambda,s)\rangle \\
 & =  \sum_{k=1}^{N}f_s(\lambda_k)\langle \frac{\bvphi(\lambda_k, s)}{\norm{\bvphi(\lambda_k,s)}^2}, \sum_{j=1}^{N}\bvphi(\lambda_j, s)\psi_j(\lambda,s)\rangle \\
 &= \sum_{k=1}^{N}f_s(\lambda_k)\langle \frac{\bvphi(\lambda_k, s)}{\norm{\bvphi(\lambda_k,s)}^2}, \bvphi(\lambda_k, s)\psi_k(\lambda,s)\rangle\\
 &=\sum_{k=1}^{N}f_s(\lambda_k)\psi_k(\lambda,s),
\end{align*}	 
which completes the proof.	  
 \end{proof}
\begin{remark}
We denote by $H_s$ the totality of quaternion functions of form (\ref{Hmodule}). Then $H_s$ is a left $\H$-module and every   function in $H_s$ has at least one sampling expansion with form (\ref{samlingexpansion}).
\end{remark}

The   proof  for Theorem  \ref{samplthm} does not describe how to compute the sample  points   $\lambda_k$  and the interpolation functions $\psi_k(\lambda,s)$ for $k=1,2,\dots,N$. In the next part, some examples are presented to illustrate the sampling theorem.   We  discuss how to formulate the sampling expansions in practice and  show the process of computation in detail.
 
\begin{example}\label{exsampling1}
	\em  Consider the quaternion difference equation
	\begin{equation}\label{exam1difequ}
	-\qi x(k+1)+\qj x(k)-\qi x(k-1) = x(k) \lambda, \quad k=1,2,\dots,N
	\end{equation}
	with boundary condition 
	\begin{equation}\label{boudarycond1}
	x(0)=x(N+1)=0.
	\end{equation}
 It is easy to see that $\pL$ defined by (\ref{diffoperator}) is normal for (\ref{exam1difequ}). For simplicity, we only show  the result for $N=3$.  By direct computations, we have that $\phi(1,\lambda,s)=s$, $\phi(2,\lambda,s)=\qi s \lambda -\qk s$ and
	\begin{equation*}
	\phi(3,\lambda,s)= -s\lambda^2 -2s, \quad 	\phi(4,\lambda,s)= -\qi s\lambda^3 +\qk s\lambda^2-3\qi s\lambda+3\qk s.
	\end{equation*}
	Thus $p_3(\lambda,s) = \phi(4,\lambda,s) + 0 \phi(3,\lambda,s)=-\qi s\lambda^3 +\qk s\lambda^2-3\qi s\lambda+3\qk s$.

	 Let $s=s_1 = 1+\qk$. We consider the sampling expansion for 
	 \begin{equation*}
	 	f_{s_1}(\lambda):= \sum_{k=1}^{N} \overline{F(k)} \phi(k,\lambda,s_1).
	 \end{equation*}
	 We need to solve the equation
	\begin{equation*}
	p_3(\lambda,s_1)=(\qj-\qi)\lambda^3 +(\qk -1)\lambda^2+3(\qj-\qi)\lambda+3(\qk -1)=0.
	\end{equation*}
	By applying the algorithm in \cite{janovska2010note} for computing the zeros, we get the zero set  of $p_3(\lambda,s_1)$: $Z(s_1)=\{\qi\}\cup\theta(\qi\sqrt{3})$. It was shown in \cite{cheng2017novel} that   two eigenvectors of a normal operator are orthogonal if they correspond to two non-similar eigenvalues. So we may set $\lambda_1=\qi$.
	It is not easy to determine two elements  $\lambda_2,\lambda_3$ of $\theta(\qi\sqrt{3})\subset Z(s_1)$   such that $\langle\bvphi(\lambda_2, s_1),\bvphi(\lambda_3, s_1)\rangle=0$.
	We provide two methods to determine the sample points $\lambda_2,\lambda_3$.
	
	\textbf{Method 1:}  \emph{Computing eigenvectors(eigenvalues)  and applying quaternion Gram-
	Schmidt process.} We need  to  compute the eigenvectors of 
	\begin{equation*}
	L=\begin{pmatrix}
	 \qj&-\qi & 0\\ 
-\qi	& \qj&-\qi  \\
0	&    -\qi& \qj
	\end{pmatrix}.
	\end{equation*}
For $\lambda=\qi\sqrt{3}$, we can get two linearly $\H$-independent eigenvectors 
\begin{equation*}
 \bxi_1=\begin{pmatrix}
\qi\\ 
-\qi\sqrt{3}-\qj  \\
\qi	
\end{pmatrix},\quad  \bxi_2=\begin{pmatrix}
-\qi\sqrt{3}-\qj \\ 
2\qi	  \\
-\qi\sqrt{3}-\qj 	
\end{pmatrix}.
\end{equation*}	
It is  fortunate that the inner product $\langle\bxi_1,\bxi_2\rangle=-4\sqrt{3}$ is real. 
So we can construct an eigenvector $\bxi_3$  corresponding to $\qi\sqrt{3}$ from $\{\bxi_1,\bxi_2\}$ such that $\langle\bxi_1,\bxi_3\rangle=0$ by the quaternion Gram-Schmidt process \cite{farenick2003spectral}. 
Let  $\bxi_3=\bxi_2-\frac{\bxi_1}{\norm{\bxi_1}^2}\langle\bxi_1,\bxi_2\rangle=( \frac{-\qi}{\sqrt{3}}-\qj,\frac{-2\qj}{\sqrt{3}},\frac{-\qi}{\sqrt{3}}-\qj)^{\top}$, then  $\langle\bxi_1,\bxi_3\rangle=0$ and
\begin{align}
L \bxi_3& = L(\bxi_2-\frac{\bxi_1}{\norm{\bxi_1}^2}\langle\bxi_1,\bxi_2\rangle)\notag\\
& = \bxi_2\qi\sqrt{3} -\frac{\bxi_1\qi\sqrt{3}}{\norm{\bxi_1}^2}\langle\bxi_1,\bxi_2\rangle \label{Lex1}\\
& = (\bxi_2-\frac{\bxi_1}{\norm{\bxi_1}^2}\langle\bxi_1,\bxi_2\rangle)\qi\sqrt{3}=\bxi_3\qi\sqrt{3}. \notag
\end{align}
 In Eq. (\ref{Lex1}),  $\qi\sqrt{3} \langle\bxi_1,\bxi_2\rangle =  \langle\bxi_1,\bxi_2\rangle \qi\sqrt{3} $ because $\langle\bxi_1,\bxi_2\rangle$ is real. Otherwise, to find suitable $\bxi$ we need   repeating eigenvector computing and quaternion Gram-Schmidt process as the procedure of diagonalization for quaternion normal matrices (see Theorem 3.3 in \cite{farenick2003spectral}).
 
By Corollary \ref{corroll1}, we know that
\begin{equation*}
\bvphi(\qi\sqrt{3},\qi)=\bxi_1, \quad \bvphi(\qi\sqrt{3},\frac{-\qi}{\sqrt{3}}-\qj) =\bxi_3
\end{equation*}
 and they are solutions of the BVP (\ref{exam1difequ})--(\ref{boudarycond1}).
 Let 
 \begin{equation*}
 t_1 = \qi^{-1}s_1=\qj-\qi, \quad   t_2 =(\frac{-\qi}{\sqrt{3}}-\qj)^{-1}s_1 =\frac{1}{4} \left(\sqrt{3}+3\right)\qi+\frac{1}{4} \left(3-\sqrt{3}\right)\qj.
 \end{equation*}
 Then we can select $\lambda_2$ and $\lambda_3$   to be
 \begin{equation*}
 \lambda_2 =t_1^{-1}\qi\sqrt{3}t_1=-\qj\sqrt{3},\quad  \lambda_3 =t_2^{-1}\qi\sqrt{3}t_2=\frac{3\qi+\qj\sqrt{3}}{2}.
 \end{equation*}
It follows that
 \begin{equation*}
\bvphi(\lambda_2,s_1)=\begin{pmatrix}
 1+\qk\\ 
 1- \sqrt{3}-(1+\sqrt{3})\qk  \\
 1+\qk	
 \end{pmatrix},\quad  \bvphi(\lambda_3,s_1)=\begin{pmatrix}
1+\qk \\ 
\frac{ \sqrt{3}-1}{2}	+ \frac{1+ \sqrt{3}}{2}\qk \\
1+\qk 	
 \end{pmatrix}.
 \end{equation*}	
 Note that $\bvphi(\lambda_1,s_1)=(1+\qk,0,-1-\qk)^{\top}$ and by some direct computations, we obtain the interpolation functions:
 \begin{align*}
 \psi_1(\lambda,s_1)&=\frac{\lambda^2+3}{2}, \\
  \psi_2(\lambda,s_1)&=\frac{-\lambda^2+(\qi+\qj\sqrt{3})\lambda+\qk\sqrt{3}}{6},\\
  \psi_3(\lambda,s_1)&=\frac{-2\lambda^2-(\qi+\qj\sqrt{3})\lambda-\qk\sqrt{3}-3}{6}.
 \end{align*}
 \textbf{Method 2:}  \emph{The method of undetermined coefficients and solving simple quaternion polynomials.} 
 It is also  natural to  construct an orthogonal basis $V=\{\bvphi(\lambda_k,s_1):k=1,2,3\}$ from $\bvphi(\lambda,s_1)$ directly without solving
 $L\bxi=\bxi\lambda$. Let $\widetilde{\lambda}_1=\lambda_1=\qi, \widetilde{\lambda}_2=\qi\sqrt{3}$. To find a $\widetilde{\lambda}_3\in \theta(\qi\sqrt{3})$ such that $\bvphi(\widetilde{\lambda}_2,s_1)$ and $\bvphi(\widetilde{\lambda}_3,s_1)$ are orthogonal, we  simply need to solve
 \begin{equation*}
 \langle \bvphi(\widetilde{\lambda}_2,s_1) ,\bvphi(\widetilde{\lambda}_3,s_1)\rangle =-2\widetilde{\lambda}_3^2+(2-2\sqrt{3})\qi\widetilde{\lambda}_3-2\sqrt{3}=0.
 \end{equation*}
The solution set is  $\{\qi,-\qi\sqrt{3}\}$. Note that $\widetilde{\lambda}_3$ has to be in $\theta(\qi\sqrt{3})$. It follows that $\widetilde{\lambda}_3=-\qi\sqrt{3}$. By some direct computations, we obtain the corresponding interpolation functions:
 \begin{align*}
\widetilde{\psi}_1(\lambda,s_1)&=\frac{\lambda^2+3}{2}, \\
\widetilde{\psi}_2(\lambda,s_1)&=\frac{-(3+\sqrt{3})\lambda^2-2\sqrt{3}\qi\lambda-3(1+\sqrt{3})}{12},\\
\widetilde{\psi}_3(\lambda,s_1)&=\frac{(\sqrt{3}-3)\lambda^2+2\sqrt{3}\qi\lambda+3(\sqrt{3}-1)}{12}.
\end{align*}
 
 Having introduced the sampling expansions for $f_{s_1}(\lambda)$, we now discuss how to derive the sampling expansions for $f_{s_2}(\lambda)$  ($s_2\neq s_1$) by making use of the relationship between  $f_{s_1}(\lambda)$ and $f_{s_2}(\lambda)$. Without loss of generality, we let $s_2 = 2\qj $  and $t = s_1^{-1}s_2=\qi+\qj$, by Lemma \ref{zerospzs}, the zero set of $p_3(\lambda,s_2)$  is  $$Z(s_2)= \{z=t^{-1}z_1 t: z_1\in Z(s_1)\}=\{\qj\}\cup\theta(\qi\sqrt{3}).$$
 It follows from   Corollary \ref{corroll1} that  $W_1 = \{\bvphi(\beta_k,s_2): k=1,2,3\}$ is an orthogonal basis where 
 \begin{equation*}
 \beta_1 = t^{-1}\lambda_1 t = \qj, \quad \beta_2 = t^{-1}\lambda_2 t = -\qi\sqrt{3},\quad  \beta_3 = t^{-1}\lambda_3 t = \frac{\sqrt{3}\qi+3\qj}{2}.
 \end{equation*}
 Similarly, another orthogonal basis is $W_2 = \{\bvphi(\widetilde{\beta}_k,s_2): k=1,2,3\}$, where
  \begin{equation*}
 \widetilde{\beta}_1 = t^{-1}\widetilde{\lambda}_1 t = \qj, \quad \widetilde{\beta}_2 = t^{-1}\widetilde{\lambda}_2 t =  \qj\sqrt{3},\quad  \widetilde{\beta}_3 = t^{-1}\widetilde{\lambda}_3 t = -\qj\sqrt{3}.
 \end{equation*}
 The corresponding interpolation functions can be computed by the definition, we omit the details.
\end{example}
\begin{remark}
   Two sampling expansions are given for  $f_{s_1}(\lambda)$ by different  methods. In fact, regardless of which method we apply, we may obtain various sampling expansions by using different initial candidate of the  sample points in $\theta(\qi\sqrt{3})$. The non-uniqueness of     sampling expansions will be further discussed in Proposition \ref{nonunique}. Another observation of this example  is that the sampling expansions for $f_{s_2}(\lambda)$  ($s_2\neq s_1$) can be easily obtained by making use of the relationship between  $f_{s_1}(\lambda)$ and $f_{s_2}(\lambda)$.
\end{remark}

%

\begin{example}
	\em  Consider the BVP (\ref{diffequ})--(\ref{BV2}) with	 the coefficients as follows.
\begin{center}
			\extrarowheight=2pt
	\begin{tabular}{c|c|c|c|c|c|c|c|c}
		\hline 
		$a(1)$	& $a(2)$ &$a(3)$ &$b(0)$ &$b(1)$  &$b(2)$  & $b(3)$ &$h_1$  &$h_2$   \\ 
		\hline 
	$\qj$	&  $\qi$&  $-\qk$& $\qi+\qj$ &  $\sqrt{3}\qj$& $\qj-\qk$ & $1+\qj$ & $-\qk$ &$ -\qi$  \\ 
		\hline 
	\end{tabular} 
\end{center}
 Then we have $\phi(1,\lambda,s)=s$, $\phi(2,\lambda,s)=-\frac{\sqrt{3}}{3}\qj s\lambda-\frac{\sqrt{3}}{3}\qk s$, $\phi(3,\lambda,s) = \frac{\sqrt{3}}{6}(\qi-1)s\lambda^2+\frac{2\sqrt{3}}{3}(\qi-1)s$ and
 \begin{align*}
  \phi(4,\lambda,s) =& \frac{\sqrt{3}}{12}(\qk+\qj+\qi-1)s\lambda^3+ \frac{\sqrt{3}}{12}(1+\qi+\qj-\qk)s\lambda^2\\
  &+\frac{\sqrt{3}}{2}(\qk+\qj+\qi-1)s\lambda+\frac{\sqrt{3}}{6}(3+3\qi+\qj-\qk)s.
 \end{align*}
 It follows that
  \begin{align*}
p_3(\lambda,s) =& \phi(4,\lambda,s) +h_2 \phi(3,\lambda,s) \\
=&\frac{\sqrt{3}}{12}(\qk+\qj+\qi-1)s\lambda^3+ \frac{\sqrt{3}}{12}(3+3\qi+\qj-\qk)s\lambda^2\\
 &+\frac{\sqrt{3}}{2}(\qk+\qj+\qi-1)s\lambda+\frac{\sqrt{3}}{6}(7+7\qi+\qj-\qk)s.
 \end{align*}
 Let $s_1=-\qk$, the zero set of $p_3(\lambda,s_1)$ is $Z(s_1)=\{-\qi-\qj,-\qi+2\qj,-\qi-3\qj\}$. Let $\lambda_1=-\qi-\qj, \lambda_2 =-\qi+2\qj, \lambda_3=-\qi-3\qj$, then 
\begin{equation*}
\bvphi(\lambda_1,s_1)=\begin{pmatrix}
-\qk\\ 
-\frac{\sqrt{3}}{3}\qk  \\
\frac{\sqrt{3}}{3}(\qj+\qk)	
\end{pmatrix},\quad \bvphi(\lambda_2,s_1)=\begin{pmatrix}
-\qk\\ 
\frac{2\sqrt{3}}{3}\qk  \\
-\frac{\sqrt{3}}{6}(\qj+\qk)	
\end{pmatrix}, \quad  \bvphi(\lambda_3,s_1)=\begin{pmatrix}
-\qk \\ 
-\sqrt{3}\qk \\
-\sqrt{3}(\qj+\qk)	
\end{pmatrix},
\end{equation*}
 are orthogonal as $\lambda_1,\lambda_2,\lambda_3$ belong to different similarity orbits. By some direct computations,  we obtain the interpolation functions:
  \begin{align*}
 \psi_1(\lambda,s_1)&=\frac{\lambda^2+\qj\lambda+7-\qk}{6}, \\
 \psi_2(\lambda,s_1)&=\frac{-\lambda^2-4\qj\lambda+4\qk+2}{15},\\
 \psi_3(\lambda,s_1)&=\frac{-\lambda^2+\qj\lambda-\qk-3}{10}.
 \end{align*}
\end{example}

\begin{remark}
 If   the number of zeros  of $p_N(\lambda,s)$ is equal to $N$ (which means that $p_N$ has $N$ distinct eigenvalues belonging to $N$ different similarity orbits), it is easy to find $N$ distinct quaternion numbers $\lambda_1,\lambda_2,\dots,\lambda_N$ satisfying the conditions  of  the proposed sampling theorem. In particular, if $p_N(\lambda,s)$ has $N$ isolated zeros,  then  the sampling expansion (\ref{samlingexpansion})  for  $f_s(\lambda)$  is unique.   
\end{remark}

In Example \ref{exsampling1}, the number of zeros for $p_N(\lambda,s)$ is less than $N$. We use different methods to obtain two   sampling expansions for  $f_s(\lambda)$.  Roughly speaking, the non-uniqueness is  caused by the non-real similarity orbits in  the  zero set  of $p_N$. 
\begin{proposition}\label{nonunique}
If  $p_N(\lambda,s)$ has at least one spherical zero,   then  there are infinitely many sampling expansions for $f_s(\lambda)$.
\end{proposition}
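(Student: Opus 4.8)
The plan is to show that the similarity orbit of the spherical zero contains infinitely many \emph{orthogonal pairs} of admissible sample points, each producing a genuinely different expansion. Let $z_0$ be a spherical zero of $p_N(\cdot,s)$ and let $w\in\C^+$ be its standard representative, so that $\theta(w)=\theta(z_0)\subseteq Z(s)$ (the zero set of $p_N(\cdot,s)$) by Theorem \ref{relation}. The first step is to \emph{linearize} $\bvphi$ along this orbit: every $\mu\in\theta(w)$ satisfies the real quadratic $\mu^2-2(\operatorname{Re}w)\mu+|w|^2=0$, so each power $\mu^j$ reduces to $a_j\mu+b_j$ with $a_j,b_j\in\mathbb{R}$; substituting into $\phi(k,\mu,s)=\sum_j c(j,k)s\mu^j$ gives $\phi(k,\mu,s)=A_k\,s\mu+B_k\,s$ with $A_k,B_k\in\H$ independent of $\mu$. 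Hence $\bvphi(\mu,s)=\bm A(s\mu)+\bm B\,s$ for fixed $\bm A,\bm B\in\H^N$ whenever $\mu\in\theta(w)$. I would then check that $\bm A,\bm B$ are $\H$-linearly independent: otherwise $\bvphi(\mu,s)$ would be a right scalar multiple of a single fixed vector for all $\mu\in\theta(w)$, and comparing first components (which all equal $s\neq 0$) of $\bvphi(\mu_1,s),\bvphi(\mu_2,s)$ for two distinct orbit points --- which are genuine eigenvectors of $L$ since $z_0$ is spherical --- forces a contradiction. Consequently $W:=\operatorname{span}_{\H}\{\bm A,\bm B\}$ is $2$-dimensional and equals $\operatorname{span}_{\H}\{\bvphi(\mu,s):\mu\in\theta(w)\}$; using normality (eigenvectors for non-similar eigenvalues are orthogonal, cf.\ \cite{cheng2017novel}), in the orthogonal basis $\{\bvphi(\lambda_k,s)\}_{k=1}^N$ supplied by Theorem \ref{spetrumdiffequ} exactly two sample points, say $\lambda_1,\lambda_2$, lie in $\theta(w)$, while $\bvphi(\lambda_3,s),\dots,\bvphi(\lambda_N,s)$ form an orthogonal basis of $W^\perp$.

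Next I would reduce the claim to a statement about pairs inside $\theta(w)$. Given any distinct $\mu_1,\mu_2\in\theta(w)$ with $\langle\bvphi(\mu_1,s),\bvphi(\mu_2,s)\rangle=0$, the set $\{\bvphi(\mu_1,s),\bvphi(\mu_2,s)\}$ is an orthogonal basis of $W$ (two orthogonal nonzero vectors of $W$, their first components being $s\neq 0$), so together with $\bvphi(\lambda_3,s),\dots,\bvphi(\lambda_N,s)$ it is an orthogonal basis of $\H^N$ consisting of solutions of (\ref{diffequ})--(\ref{BV2}); running the proof of Theorem \ref{samplthm} verbatim with this basis then yields a sampling expansion of $f_s$ with sample set $\{\mu_1,\mu_2,\lambda_3,\dots,\lambda_N\}$. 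Distinct unordered pairs $\{\mu_1,\mu_2\}$ give distinct sample sets, hence distinct expansions, so it suffices to produce infinitely many such pairs. Writing $\nu=s\mu s^{-1}$ and $\nu'=s\mu's^{-1}$ (both in $\theta(w)$) and using $\bvphi(\mu,s)=(\bm A\nu+\bm B)s$, a direct computation gives $\langle\bvphi(\mu,s),\bvphi(\mu',s)\rangle=\bar s\,(P\,\bar\nu\nu'+\bar\nu Q+\bar Q\,\nu'+R)\,s$, where $P=\norm{\bm A}^2>0$, $Q=\langle\bm A,\bm B\rangle$, $R=\norm{\bm B}^2>0$. Dividing by $P$, setting $q=Q/P$, $r=R/P$, and completing the square, this vanishes if and only if $\overline{(\nu+q)}\,(\nu'+q)=|q|^2-r=:\delta$, and the Cauchy--Schwarz inequality --- strict because $\bm A,\bm B$ are independent --- gives $\delta<0$.

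The last step is elementary sphere geometry in $\R^4\cong\H$. Put $x=\nu+q$, $x'=\nu'+q$; as $\nu,\nu'$ range over $\theta(w)$ these range over the translated $2$-sphere $S=\theta(w)+q$, which lies in the affine $3$-flat $F=\{u:\operatorname{Re}u=\operatorname{Re}w+\operatorname{Re}q\}$ and has centre $c_0=\operatorname{Re}w+q$ and radius $\rho=|\operatorname{Im}w|$. We need distinct $x,x'\in S$ with $\bar x\,x'=\delta$. Since $\delta$ is real and nonzero, $\bar x x'=\delta$ forces $x'=\tfrac{\delta}{|x|^2}x$, so $0,x,x'$ are collinear. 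The existence of at least one such pair (from $\lambda_1,\lambda_2$ via Theorem \ref{spetrumdiffequ}) therefore forces the origin onto that secant line, hence $0\in F$; and for any line through the origin in $F$, written $\{tv:t\in\mathbb R\}$ with $v\in F$ a unit vector, its intersection with $S$ is governed by $t^2-2t\langle v,c_0\rangle+(|c_0|^2-\rho^2)=0$, whose two roots multiply to $|c_0|^2-\rho^2$. Evaluating this on the known secant, where $\bar x x'=t_1t_2=\delta$, shows $|c_0|^2-\rho^2=\delta<0$, i.e.\ the origin lies strictly inside $S$. Hence \emph{every} line through the origin in $F$ is a secant of $S$, and its two distinct intersection points $x_1,x_2$ automatically satisfy $\bar x_1 x_2=t_1t_2=\delta$. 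Translating by $-q$ and conjugating by $s$, each such line yields a distinct unordered pair $\{\mu_1,\mu_2\}\subset\theta(w)$ with $\langle\bvphi(\mu_1,s),\bvphi(\mu_2,s)\rangle=0$; since there are infinitely many lines through the origin in the $3$-dimensional space $F$, there are infinitely many such pairs, and therefore infinitely many sampling expansions of $f_s(\lambda)$.

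The main obstacle I anticipate is precisely the passage from ``one orthogonal pair exists'' to ``infinitely many exist'': naively one is intersecting two $2$-spheres in $\R^4$, which would leave only isolated solutions. What rescues the argument is the rigidity uncovered in Step 1 --- that $\bvphi(\,\cdot\,,s)$ is affine in $\mu$ over the orbit, which pins the multiplicity of $w$ down to exactly $2$ and collapses the orthogonality relation to the degenerate ``power of a point'' identity $\bar x_1 x_2=|c_0|^2-\rho^2$ --- so that a single guaranteed solution automatically propagates to a whole two-parameter family. Thus the delicate part is verifying the structural facts of Step 1 (independence of $\bm A,\bm B$; exactly two sample points in $\theta(w)$; the fixed $\bvphi$-basis of $W^\perp$); once these are in hand, the concluding geometry is routine.
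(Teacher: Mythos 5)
Your proof is correct, and it takes a genuinely different route from the paper's. The paper's own argument is a short two-case construction: if the orbit of the spherical zero carries a single sample point, replace that point by an arbitrary conjugate (orthogonality with the remaining $\bvphi(\lambda_j,s)$ being automatic for non-similar eigenvalues); if it carries several, greedily re-pick points $\alpha_1,\alpha_2,\dots$ inside the orbit subject to the successive orthogonality constraints, the nonemptiness of the sets $S_1\cap\theta(\lambda_0)$, $S_2\cap S_1\cap\theta(\lambda_0)$, etc.\ being asserted rather than proved. You instead expose the structure that makes those choices possible: reducing powers of $\mu$ modulo the real quadratic of the orbit shows that $\bvphi(\mu,s)$ is affine in $\mu$ along $\theta(w)$, so the eigenvectors attached to the orbit span a two-dimensional submodule $W$ and exactly two of the sample points of Theorem \ref{spetrumdiffequ} lie on the orbit; the orthogonality condition within the orbit then collapses to the power-of-a-point identity $\overline{(\nu+q)}(\nu'+q)=\delta$ with $\delta<0$ by strict Cauchy--Schwarz, the known pair places the origin strictly inside the translated sphere, and every line through the origin in the $3$-flat produces a new orthogonal pair. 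The paper's proof buys brevity; yours buys rigour and sharper information: it supplies the existence statements the paper leaves implicit, parametrizes all admissible pairs on the orbit by a two-parameter family of lines, and shows each spherical orbit carries exactly two sample points (in particular the paper's Case 1, with $N$ distinct zero-orbits and a spherical zero, cannot actually occur, since a spherical zero forces the central quadratic of the orbit to divide $p_N(\lambda,s)$). The only spots to flesh out in a final write-up are the compressed claims of your Step 1: the $\H$-independence of $\bm{A},\bm{B}$ (the first components give $A_1=0$ and $B_1=1$, and dependence would make $\bvphi(\mu_1,s)=\bvphi(\mu_2,s)$ for distinct orbit points, contradicting $\bvphi(\mu_1,s)(\mu_1-\mu_2)=0$ with nonzero first entry), and the fact that at least two sample points lie in $\theta(w)$ (every eigenvector with eigenvalue in the orbit is, by Corollary \ref{corroll1}, a right scalar multiple of some $\bvphi(\mu,s)$ and hence lies in $W$, while the basis vectors off the orbit are orthogonal to $W$ and so cannot span it); both are routine to fill in.
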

\begin{proof}
	Case 1: If the number of zeros for $p_N(\lambda,s)$ is equal to $N$. Suppose that  $f_s(\lambda)$ has a sampling expansion at   $\lambda_1,\lambda_2,\dots,\lambda_N$.  Without loss of generality, assume that 
	 $\lambda_1$ is a spherical zero of $p_N(\lambda,s)$. Then for any non-real $\beta$, $f_s(\lambda)$ has a sampling expansion at   $\beta^{-1}\lambda_1\beta,\lambda_2,\dots,\lambda_N$.

	Case2:  If the number of zeros for $p_N(\lambda,s)$ is less than $N$. Then there exists at least one non-real similarity orbit  $\theta(\lambda_0)$ containing at least two sample points, where  $\lambda_0$ is a spherical zero of $p_N(\lambda,s)$. Suppose that  $f_s(\lambda)$ has a sampling expansion at   $\lambda_1,\lambda_2,\dots,\lambda_N$.  Without loss of generality, assume that $\lambda_1,\lambda_2,\dots,\lambda_{n_1} \in \theta(\lambda_0)$ with $n_1\geq 2$. Pick $\alpha_1\in \theta(\lambda_0)$ and $\alpha_1\neq \lambda_i$ for all $1\leq i\leq n_1$. We denote the solution set of   
	 \begin{equation*}\label{E1}
	 \langle \bvphi(\alpha_1,s) ,\bvphi(\alpha,s)\rangle =0  
	 \end{equation*}
	 by $S_1$.  Pick $\alpha_2\in S_1\cap\theta(\lambda_0)$ and denote by $S_2$ the solution set of 
	  \begin{equation*}\label{E2}
	 \langle \bvphi(\alpha_2,s) ,\bvphi(\alpha,s)\rangle =0. 
	 \end{equation*}
	 Then we can find $\alpha_3\in S_2\cap S_1\cap\theta(\lambda_0)$ such that $\bvphi(\alpha_1,s)$, $\bvphi(\alpha_2,s)$, $\bvphi(\alpha_3,s)$ are orthogonal. Repeating this process, we have $\alpha_1,\alpha_2,\dots,\alpha_{n_1} \in \theta(\lambda_0)$  such that 
	 $f_s(\lambda)$ has a sampling expansion at   $\alpha_1,\alpha_2,\dots,\alpha_{n_1},\lambda_{n_1+1},\lambda_{n_1+2},\dots,\lambda_N$. Since $\theta(\lambda_0)$ contains infinitely many elements, we can always find new sample points to construct   sampling expansions for $f_s(\lambda)$.  The proof is complete.
\end{proof}	 
	  
\section{Characteristic polynomials for  tridiagonal quaternion matrices}\label{S4}

For any complex or real matrix $B$, we can compute its eigenvalues by finding all zeros of polynomial $\det(B-\lambda I)$.
When considering quaternion matrix $A\in M_n(\H)$, however, we don't know whether there exists a quaternion polynomial $p_A$ of degree $n$ having the property that  $p_A(\lambda)=0$ if and only
if $\lambda$ is a   eigenvalue of $A$. It is difficult to find such a $p_A$ by the traditional  methods used in the complex case. The reasons are two-fold.
\begin{enumerate}
	\item The non-singularity of $A-\lambda I$ does not mean $A\bxi= \bxi\lambda $ for some $\bxi\neq 0$.
	\item Though $A-\lambda I$ is non-singular if and only if  $A\bxi= \lambda\bxi $ for some $\bxi\neq 0$,  $\det(A-\lambda I)$ is not a polynomial with respect to $\lambda$. Here, $\det$ is  a arbitrary definition of  quaternion determinant described in \cite{aslaksen1996quaternionic}.
\end{enumerate}

In general,  to obtain the right eigenvalues of $A\in M_n(\H)$, one needs to compute the  eigenvalues of the complex   adjoint   matrix $\chi_A$ \cite{zhang1997quaternions} of $A$, where $\chi_A$ is a $2n\times 2n$ complex matrices with the form
	\begin{equation*}
	\chi_A=\begin{pmatrix}
		A_1&A_2 \\ 
		- \overline{A_2}	& \overline{A_1}
	\end{pmatrix}.
\end{equation*} 
Here $A_1,A_2$   stem from the unique 
representation $A = A_1 + A_2\qj,\ A_1,A_2\in M_n(\mathbb{C})$.

Theorem \ref{relation} gives us a hint to compute the right  eigenvalues of tridiagonal symmetric quaternion matrices by finding the zeros of a simple quaternion polynomial. The following result is a direct consequence of Theorem \ref{relation}.
\begin{theorem}\label{characterpoly}
	 Let $A\in M_n(\H)$ be a  tridiagonal symmetric quaternion matrix, there exists a simple quaternion polynomial $p_n(A,z)$ such that $$\sigma(A) = \{\alpha^{-1}\lambda\alpha:\alpha\neq 0, \lambda\in Z_p\},$$
	 where $\sigma(A)$ and $Z_p$ are the right spectrum of $A$ and the zero set of  $p_n(A,z)$ , respectively.
\end{theorem}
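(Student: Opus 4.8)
The plan is to reduce Theorem \ref{characterpoly} to the case already treated by Theorems \ref{relation} and \ref{spetrumdiffequ} by recognizing that any tridiagonal symmetric quaternion matrix $A\in M_n(\H)$ can be put in the shape of the matrix $L$ coming from a boundary-value problem of type (\ref{diffequ})--(\ref{BV2}). First I would write $A=(a_{ij})$ with $a_{i,i+1}=a_{i+1,i}=:\beta_i$ for $1\le i\le n-1$ and $a_{ii}=:\alpha_i$. The genuinely off-diagonal entries $\beta_1,\dots,\beta_{n-1}$ are all we need to be nonzero; if some $\beta_i=0$ the matrix is block-diagonal and we may treat each block separately (or perturb, but the block argument is cleaner). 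So assume $\beta_i\ne 0$. Now I would \emph{define} the data of a difference equation from $A$: set $b(k)=\beta_k$ for $1\le k\le n-1$, choose any $b(0),b(n)\in\H\setminus\{0\}$, set $a(k)=\alpha_k$ for $2\le k\le n-1$, and then \emph{solve} for $h_1,h_2$ from the two end conditions $d_1=a(1)-b(0)h_1=\alpha_1$ and $d_2=a(n)-b(n)h_2=\alpha_n$; that is, take $a(1)$ to be any quaternion, $h_1=b(0)^{-1}(a(1)-\alpha_1)$, and similarly $h_2=b(n)^{-1}(a(n)-\alpha_n)$. With these choices the matrix $L$ associated in Section \ref{S3} to the BVP (\ref{diffequ})--(\ref{BV2}) is exactly $A$ (here $N=n$).

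Having matched $A=L$, the polynomial I would take is $p_n(A,z):=p_N(z,1)=\phi(N+1,z,1)+h_2\phi(N,z,1)$ with the normalization $s=1$; by the discussion preceding Theorem \ref{relation} this is a simple quaternion polynomial of degree $n$ in $z$. It remains to verify the two inclusions in $\sigma(A)=\{\alpha^{-1}\lambda\alpha:\alpha\ne 0,\ \lambda\in Z_p\}$. For "$\supseteq$": if $\lambda\in Z_p$, i.e.\ $p_N(\lambda,1)=0$, then by Theorem \ref{relation} (equivalence of statements 2 and 3) we get $L\bvphi(\lambda,1)=\bvphi(\lambda,1)\lambda$, and since the first entry $\phi(1,\lambda,1)=1\ne 0$ the vector $\bvphi(\lambda,1)$ is nonzero, so $\lambda\in\sigma(A)$; then because $\sigma(A)$ is a union of similarity orbits (the remark in Section \ref{S21}: if $\theta(q)\cap\sigma(A)\ne\emptyset$ then $\theta(q)\subseteq\sigma(A)$) every $\alpha^{-1}\lambda\alpha$ also lies in $\sigma(A)$. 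For "$\subseteq$": let $\gamma\in\sigma(A)$; by the standard-eigenvalue theory (Theorem \ref{spectral2} / the cited facts in Section \ref{S21}) there is a complex $z\in\C^+$ with $z\in\theta(\gamma)\cap\sigma(A)$, and a corresponding eigenvector $\bxi$ with $A\bxi=\bxi z$; by Corollary \ref{corroll1}(1) its first entry $\xi_1\ne 0$ and $\bvphi(z,\xi_1)=\bxi$ solves the BVP, whence $p_N(z,\xi_1)=0$; then Lemma \ref{zerospzs} with $s_1=\xi_1,\ s_2=1$ gives that $\xi_1 z\xi_1^{-1}$ — more precisely, $s z s^{-1}$ with $s=\xi_1^{-1}\cdot 1=\xi_1^{-1}$, i.e.\ $\xi_1 z\xi_1^{-1}$ — is a zero of $p_N(z,1)=p_n(A,z)$, so this element $\lambda:=\xi_1 z\xi_1^{-1}$ lies in $Z_p$ and is similar to $z$, hence to $\gamma$; thus $\gamma=\alpha^{-1}\lambda\alpha$ for a suitable $\alpha$. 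Combining the two inclusions finishes the proof.

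I would present the argument in that order: (i) reduce to $\beta_i\ne 0$ and realize $A$ as an $L$; (ii) define $p_n(A,z)=p_N(z,1)$ and note it is a simple quaternion polynomial of degree $n$; (iii) prove $Z_p$ and its similarity orbits lie in $\sigma(A)$ via Theorem \ref{relation}; (iv) prove $\sigma(A)$ is covered by passing to a complex standard eigenvalue, applying Corollary \ref{corroll1}(1), and transporting the zero back to $s=1$ via Lemma \ref{zerospzs}. The main obstacle — and the only step needing a little care — is the bookkeeping in step (iv): one must pass from an arbitrary right eigenvalue $\gamma$ to a \emph{complex} representative $z$ in its orbit (so that the eigenvector/solution correspondence of Corollary \ref{corroll1} applies cleanly), and then correctly track which conjugate of $z$ is actually a zero of the $s=1$ polynomial using the precise statement of Lemma \ref{zerospzs} with $s=s_1^{-1}s_2=\xi_1^{-1}$. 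Everything else is a direct citation of the results already established in Sections \ref{S3} and \ref{S21}.
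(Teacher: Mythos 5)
Your proposal is correct and follows exactly the route the paper intends: the paper offers no written proof beyond declaring the theorem ``a direct consequence of Theorem \ref{relation}'', and your realization of $A$ as the matrix $L$ of a BVP, the choice $p_n(A,z)=p_N(z,1)$, and the two inclusions via Theorem \ref{relation}, Corollary \ref{corroll1}(1) and Lemma \ref{zerospzs} (with $s=\xi_1^{-1}$) are precisely the details being suppressed. The only caveat is your aside on vanishing off-diagonal entries: ``treat each block separately'' does not immediately produce a single polynomial, since the zero set of a product of simple quaternion polynomials is not just the union of the factors' zero sets (only the union of similarity orbits survives, which needs its own argument); but the paper implicitly assumes nonzero sub/super-diagonal entries (as $b(k)\in\H\setminus\{0\}$ throughout), so your main argument matches its scope.
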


\begin{example}
\em	Consider the eigenvalue problem for
	\begin{equation*}
	A=\left(\begin{array}{cccc}
	1&  1+\qi& 0 & 0 \\
	1+\qi&  \qi& 1+\qj & 0 \\
	0& 1+\qj &  \qj& 1+\qk \\
	0& 0 &  1+\qk& \qk
	\end{array}\right).
	\end{equation*}
On the one hand, by solving the quaternion difference equation associated with 	$A$, we can construct a quaternion polynomial  
	\begin{equation*}
p_4(A,z)=( {\qi+\qk })z^4+(3-\qi-\qj-\qk)z^3+(3-\qi-\qj-\qk)z^2+(\qj-3-3\qi-3\qk)z+1-4\qi+\qj.
\end{equation*}
On the other hand, by computing the eigenvalues of 
	\begin{equation*}
	\chi_A= \left(
	\begin{array}{cccccccc}
	1 & 1+\qi & 0 & 0 & 0 & 0 & 0 & 0 \\
	1+\qi & \qi & 1 & 0 & 0 & 0 & 1 & 0 \\
	0 & 1 & 0 & 1 & 0 & 1 & 1 & \qi \\
	0 & 0 & 1 & 0 & 0 & 0 & \qi & \qi \\
	0 & 0 & 0 & 0 & 1 & 1-\qi & 0 & 0 \\
	0 & 0 & -1 & 0 & 1-\qi & -\qi & 1 & 0 \\
	0 & -1 & -1 & \qi & 0 & 1 & 0 & 1 \\
	0 & 0 & \qi & \qi & 0 & 0 & 1 & 0 \\
	\end{array}
	\right),
	\end{equation*}
we can find the standard eigenvalues of $A$.

 Table \ref{table-ex} displays the  zeros of $p_4$ and the standard eigenvalues of $A$. We see that $z_k$ is similar with $\lambda_k$ for $k=1,2,3,4$, since they have the same real part and the same norm \cite{zhang1997quaternions}. It is known that $\gamma$ is an eigenvalue  if and only if $\gamma$ is similar with a standard eigenvalue. By the transitive property of quaternion similarity, we have that $\gamma$ is an eigenvalue of $A$  if and only if $\gamma$ is similar with a zero of $p_4(A,z)$. This validates the statement of Theorem \ref{characterpoly}.
	\begin{table}[ht]
	\centering
	\caption{The zeros of $p_4$ and the standard eigenvalues of $A$ computed via $\chi_A$.}{
		\begin{tabular}{|  c   | @{} c @{} |@{}  c@{} |   c   |}
			\hline
			\multicolumn{2}{|c|}{Zeros of $p_4$}&  \multicolumn{2}{c|}{Standard eigenvalues of $A$}\\
			\hline
			$z_1$  &  $-1.12826 -0.378569 \qi+0.22245 \qj-0.321633 \qk $&  ~~$-1.12826+0.544285 \qi$~~& $\lambda_1$ \\ \hline
			$z_2$  &  ~$-0.208978-0.433043 \qi+0.412505 \qj+0.129384 \qk $ ~ & ~ $-0.208978+0.611905 \qi$ ~ & $\lambda_2$ \\ \hline
			$z_3$  &  $  1.03613+1.08041 \qi +0.0906333 \qj+0.326603 \qk$ &  $1.03613   +1.13233 \qi$ & $\lambda_3$ \\  \hline
			$z_4$  &  $ 1.3011+1.8469 \qi +0.0828995 \qj  +0.843984 \qk $ &  $1.3011  +2.0323 \qi$ & $\lambda_4$ \\ \hline
	\end{tabular}}\label{table-ex}
\end{table}
\end{example}

 \section{Conclusions}\label{S5}
 
 In this paper,
we investigate  the relationship between sampling theory and quaternion difference equations. The sampling expansions associated with quaternion difference equations are derived. Through examples, we show the computational  techniques for the proposed formula  in detail. Additionally, we find the characteristic polynomials for  tridiagonal symmetric quaternion matrices.

\section*{Acknowledgment}
This work was supported by the Guangdong Basic and Applied Basic Research Foundation (No. 2019A1515111185), Young Innovative Talents Project of Guangdong (2019KQNCX156), the Science and Technology Development Fund, Macau SAR (No. FDCT/085/2018/A2), Natural Science Foundation of Zhejiang Province (No. LY20A010016),  National Natural Science Foundation of China (No. 11671176).

{\small

}
\end{document}